\newlist{myitemize}{itemize}
{2}
\setlist[myitemize,1]{label={$\bullet$},leftmargin=1cm}
\newtheorem{preremark}{Remark}[section]
\newenvironment{remark}{\vspace{1ex}\begin{preremark}\hspace{0.2mm}\rm}{\end{preremark}\vspace{1ex}}
\newtheorem{preprop}{Proposition}[section]
\newcounter{example}
\begin{document}

\title{Solving the 3D high-frequency Helmholtz equation
using contour integration and polynomial preconditioning}

\author{Xiao Liu \thanks{Department of Computational and Applied Mathematics, Rice
University, Houston, TX 77005 (xiao.liu@rice.{\allowbreak}edu)}
\and Yuanzhe Xi \thanks{Department of Mathematics, Emory University, GA 30322 (yxi26@emory.{\allowbreak}edu)}
\and Yousef Saad \thanks{Department of Computer Science and Engineering,
University of Minnesota, Minneapolis, MN 55455
(saad@umn.{\allowbreak}edu) }
\and Maarten V. de Hoop
\thanks{Department of Computational and Applied mathematics,
Rice University, Houston, TX 77005
(mdehoop@rice.{\allowbreak}edu)}}

\maketitle

\begin{abstract}
  We propose an iterative solution method for the 3D high-frequency Helmholtz
  equation that exploits  a contour integral formulation of spectral
  projectors. In this  framework, the solution in certain invariant subspaces 
is approximated by solving complex-shifted linear systems,
resulting in faster  GMRES iterations due to the  restricted  spectrum.
 The shifted systems are solved by
 exploiting a polynomial fixed-point iteration, 
which is a robust scheme even if the magnitude of the shift is small.
Numerical tests in 3D indicate  that $O(n^{1/3})$ matrix-vector products
are needed to solve a high-frequency problem with a matrix size $n$ with
high accuracy. The method has a small storage requirement,
can be applied to both dense and sparse linear systems,
and is highly parallelizable. 
\end{abstract}

\begin{keywords}
Helmholtz preconditioner, Cauchy integral, shifted Laplacian,
polynomial iteration
\end{keywords}

\begin{AMS} 15A06, 65F08, 65F10, 65N22, 65Y20 \end{AMS}

\pagestyle{myheadings} \thispagestyle{plain}
\markboth{X. LIU, Y. XI, Y. SAAD, AND M. V. DE HOOP}
{\uppercase{Matrix-free Helmholtz solution}}

\section{Introduction}
\subsection{Problem of interest}
Helmholtz-type equations are second-order partial differential equations 
that model time-harmonic waves in materials with linear constitutive relations.
For the scalar case, the Helmholtz operator can be written as
\begin{equation}
-\Delta-\omega^2/c^2(x),
\label{eq:helmholtz-op}
\end{equation}
where $\omega$ is the angular frequency,
and $c(x)$ is the wavespeed.
After one of several  types of discretizations is applied,  
we end up with  an $n\times n$ linear system of the following form to solve:
\begin{equation}
Au=f.
\label{eq:lin-sys}
\end{equation}
The main subject of this paper is the fast iterative solution
of \eqref{eq:lin-sys}.
The linear system \eqref{eq:lin-sys} is challenging to solve
because the coefficient matrix $A$ is typically \emph{highly indefinite} and
\emph{non-Hermitian}.

\subsection{Existing work}
Iterative methods  can show fast convergence  for non-Hermitian linear
systems,  when the  spectrum of  the coefficient  matrix lies  in some
confined  region  in  the  complex plane  that  excludes  the  origin.
Chebyshev                   iteration                   \cite{wrigley,
  manteuffel77,manteuffel78,gutknecht}  is  among  the first  type  of
modern iterative  methods developed for  the non-Hermitian case.  It is
suitable for well-conditioned cases where  the spectrum is enclosed in
an ellipse  some distance  away from  the origin.  Later, alternatives
using least-squares  polynomials \cite{saad-lspoly}, were  designed by
minimizing some  weighted L2 norm  of the  residual on a  polygon that
encloses the spectrum. At the  same time Krylov subspace methods, such
as  GMRES \cite{saad-gmres},  were  found appealing  as  they did  not
require any prior spectral information. These methods converge well if
the  numerical  range   is  not  close  to   the  origin  \cite{elman,
  beckermann}.  For (modified)  Hermitian and skew-Hermitian splitting
methods \cite{bai-hss, bai-mhss}, the spectrum is assumed to be in one
of the  four quadrants. However,  the performance of these  methods in
solving  \eqref{eq:lin-sys} is  usually unacceptably  poor due  to the
unfavorable spectrum of the discretized Helmholtz operators.

In order to obtain  iterative schemes that converge fast when
solving \eqref{eq:lin-sys},
several efficient preconditioners such as optimized Schwarz method \cite{benamou,gander-nodd,gander-dd},
PML sweeping preconditioners \cite{enquist-pml, stolk-dd, zepeda},
and shifted Laplacian preconditioners \cite{bayliss,erlangga,vangijzen07} have recently  been proposed. These preconditioners are much more expensive to
construct than those for solving standard elliptic PDEs. We highlight
specifically the idea  of shifted Laplacian preconditioners
\cite{erlangga} which relies on
the fact  it is easier to solve linear systems with the shifted matrix
$A-zI$, for some complex number $z$, than with the original matrix $A$.
The idea of complex shifts is generalized in \cite{xi-ration}
based on contour integration formulations
that are formerly used in eigenvalue computations \cite{feast,ls-feast}.
The method draws a circular contour in the complex plane to decompose the spectrum of $A^{-1}$,
and the decomposed sub-problems are then solved separately.

The application  of shifted Laplacian--type preconditioners  $A-zI$ is
usually  based  on  extending  standard elliptic  solvers  to  complex
matrices. Two popular choices are incomplete LU (ILU) \cite{saad-ilut,
  osei-shift} and  multigrid methods \cite{mac-camg}. One  major issue
associated  with  these  preconditioners  is  that  their  performance
deteriorates   dramatically   as   the  angular   frequency   $\omega$
increases. This is  because on the one hand, a  small magnitude of $z$
is  necessary for  convergence  when  solving high-frequency  problems
\cite{gander-smallshift} and on  the  other  hand, a  small  $|z|$  will
significantly  increase   the  computational  burden   to  approximate
$(A-zI)^{-1}$:  standard multigrid  methods are  no longer  guaranteed
to be effective \cite{cocquet-largeshift}, and ILU  factors become dense and
even  unstable.  The aim of this paper
is  to  propose  an  efficient  and  robust
preconditioning technique to overcome these difficulties.

\subsection{Outline of the proposed method}
In this paper, we solve the 3D Helmholtz problems
in a contour integration framework adapted from \cite{xi-ration},
with a new fixed-point iteration for the shifted systems.
The fixed-point iteration is based on a polynomial approximation of the
matrix exponential,
which is suitable for the case when the spectrum is  
confined in a rectangle with a small separation away from the origin and
the standard Chebyshev iteration diverges. Compared with existing methods
for solving shifted problems,
the new approach is robust and has a fixed storage requirement
even if the imaginary part of the shift nears the origin.
In the proposed  contour integration framework,
the fixed-point iteration is used to resolve components of  
the sub-problem associated  with large eigenvalues of $A$,
and GMRES is used to resolve the remainder.

For the Helmholtz  equation with the impedance  boundary condition, we
show for an idealized case that inside some contour the imaginary part
of  each   eigenvalue  is
well separated   from  the  real   axis.   Then,
$\mathrm{i}A$  may have  a positive  definite Hermitian  part for  the
problem  inside  the  contour,  in   which  case  the  Elman  estimate
\cite{elman} gives a  rather good estimation of  the GMRES convergence
rate.  We give several techniques  to improve the effectiveness of the
solver and  demonstrate the performance of the proposed scheme
for challenging high-frequency
variable-coefficient problems  in 3D. It  is well known  that spectral
methods require fewer grid points  per wavelength relative to finite
difference and finite element methods \cite{shenH}. Since the proposed
method only accesses the matrix  through matrix-vector products, it is
ideally  suited  for  solving  dense linear  systems  resulted  from
spectral discretizations.

The remaining  sections are  organized as follows.   In Section  2, we
review the contour integration framework for general indefinite linear
systems.  In Section 3, we  characterize the spectrum of the Helmholtz
problem based on linear  algebra assumptions.  A fixed-point iteration
method  is developed  in  Section  4 to  solve  shifted problems.   In
Section  5,  convergence  is  studied based  on  the  distribution  of
eigenvalues of the interior impedance problem.  Some useful techniques
are provided  in Section 6  to achieve  an optimal performance  of the
proposed method.  In Section 7, numerical examples are presented using
Fourier spectral and finite difference methods.  Conclusions are drawn
in Section 8.

The following notation will be used throughout the remaining sections:
\begin{myitemize}
\item $\operatorname{Range}(G)$ and $\operatorname{Null}(G)$ denote the range and null space, respectively, of a matrix $G$; 
\item $\rho(G)$ represents the spectral radius of a matrix $G$;
\item 
$G\succ(\succeq)\ 0$ means $G$ is Hermitian positive (semi-)definite.
\end{myitemize}
\section{Review of the contour integration framework}\label{sec:fci}
The inverse of a matrix $A$ can be approximated by a linear combination of
the resolvent $(A-zI)^{-1}$ with several complex shifts \cite{saad-eigbook, xi-ration}.
In this section, we provide theoretical justifications of the key ideas in \cite{xi-ration} and also suggest some new improvements.

In \cite{xi-ration}, the authors only consider  circular contours. Here, we first generalize their results to arbitrary contours. Let $\gamma$ be a closed piece-wise smooth Jordan curve in the complex plane $\mathbb{C}$ that encloses the origin and such that no eigenvalue of $A$ lies on $\gamma$.
Then the eigenprojector $P$ associated with 
the eigenvalues \emph{outside} $\gamma$ can be expressed as
\begin{equation}
P =\frac{1}{2\pi\mathrm{i}}
\int_\gamma\left(I-zA^{-1}\right)^{-1}\frac{\mathrm{d}z}{z},
\label{eq:eigenprojector}
\end{equation}
where the integral is taken counter-clockwise on $\gamma$.
%
This is because
\begin{equation}
\label{eq:eigenprojector2}
P=\frac{-1}{2\pi\mathrm{i}}\int_\gamma
\left(\frac{1}{z}-A^{-1}\right)^{-1}\mathrm{d}\frac{1}{z}
= \frac{1}{2\pi\mathrm{i}}\int_{\gamma^{-1}}
\left(z'I - A^{-1}\right)^{-1}\mathrm{d}z',
\end{equation}
where $\gamma^{-1} = \{z^{-1}: z\in\gamma\}$ and the last integral is taken counter-clockwise on $\gamma^{-1}$.
The right-hand side of \eqref{eq:eigenprojector2} takes the standard form of an eigenprojector of $A^{-1}$ associated with eigenvalues enclosed by $\gamma^{-1}$,
see for example \cite[Theorem 3.3]{saad-eigbook}.
Assuming that the
$1/\lambda_i$'s are those eigenvalues of $A^{-1}$ enclosed by $\gamma^{-1}$, we then have
\[
\operatorname{Range}(P)=
\bigoplus_i\operatorname{Null}\left(\frac{1}{\lambda_i} I - A^{-1}\right)^{l_i}
=\bigoplus_i\operatorname{Null}\left({\lambda_i} I - A\right)^{l_i},
\]
where $l_i$ is the \emph{index} of $\lambda_i$
\cite[Sec. 1.8.2]{saad-eigbook}
The above equality implies that $P$ in \eqref{eq:eigenprojector} is equal
to the spectral projector of $A$ associated with eigenvalues outside $\gamma$.

The method proposed in \cite{xi-ration} is based on the 
Cauchy integral representation of $PA^{-1}$:
\begin{equation}
PA^{-1}=\frac{1}{2\pi\mathrm{i}}
\int_\gamma\left(A-zI\right)^{-1}\frac{\mathrm{d}z}{z}.
\label{eq:rational}
\end{equation}
Again, the integral is taken counter-clockwise on $\gamma$.
$PA^{-1}$ ignores the eigenvalues of $A$ that are inside $\gamma$,
and attempts to solve the restricted problem for eigenvalues outside $\gamma$.

If a numerical quadrature rule is applied to discretize the right hand side of \eqref{eq:rational},
$PA^{-1}$ can be approximated as
\begin{equation}
PA^{-1} \approx \sum_i \frac{\sigma_i}{z_i} (A-z_i I)^{-1},
\end{equation}
where $\{z_i\}$ are the quadrature nodes along $\gamma$ and $\{\sigma_i\}$ are the 
corresponding weights.

For a given right-hand side $f$, the method proposed in \cite{xi-ration}
first approximates $PA^{-1} f \approx w := \sum_i \frac{\sigma_i}{z_i} (A-z_i I)^{-1}f$
and then tries to minimize the residual of the solution in the range of $I-P$ with an iterative method
\begin{equation}
\min_v \|Av - (f-Aw)\|_2.
\label{eq:inner-problem}
\end{equation}
Afterwards, $v+w$ serves as an approximate solution.
The problem \eqref{eq:inner-problem} is easier to solve 
than the original problem because the spectrum is restricted inside $\gamma$. This framework has some flexibilities regarding the selection of contours and quadrature points and is summarized in 
Algorithm \ref{alg:fci}. Since the linear system is not solved to high accuracy in a single run of {\sf FCI}, it is necessary to use flexible preconditioned GMRES \cite{saad-fgmres}
or iterative refinement to improve the accuracy.

\begin{algorithm}
[!ptbh]\caption{Fast contour integration approximation of $A^{-1}f$}
\label{alg:fci} \tabcolsep=0.7mm\renewcommand{\arraystretch}{1.05}
\begin{algorithmic}
[1]\Procedure{\sf FCI }{$f\in\mathbb{C}^n,A\in\mathbb{C}^{n\times n},
\{z_i\in\gamma\},\{\sigma_i\in\mathbb{C}\}$}

\Comment{$z_i$ and $\sigma_i$ are quadrature points and weights on a contour $\gamma$}
\State{Solve $(A - z_i I)y_i = f$ for each quadrature point $z_i$}
\State{Approximate $PA^{-1}f$ with a quadrature
\[
w = \sum_i \frac{\sigma_i}{z_i} y_i
\]
}
\State{Compute the step size $d$ as follows to compensate quadrature error
\[d=\operatorname{argmin}_{d\in\mathbb{C}} \|f-dAw\|_2\]
}
\State{Solve $v=\operatorname{argmin}_v \|Av -(f-dAw)\|_2$ with 
a few steps of \textsf{GMRES}}
\State{\Return the approximate solution $v+dw$ }
\EndProcedure
\end{algorithmic}
\end{algorithm}

In the following sections, we will discuss how to maximize the efficiency of Algorithm \ref{alg:fci} to solve \eqref{eq:lin-sys} by exploiting the spectral properties of the discretized Helmholtz operators. 

\section{Eigenvalue distribution of the discretized Helmholtz operators}\label{sec:spectrum}
In order to apply the {\sf FCI} preconditioner
(Algorithm \ref{alg:fci}) to solve the linear system \eqref{eq:lin-sys}, the spectrum information of the coefficient matrix is crucial for the selection of the contour as well as the resulting preconditioning effect. In this section, we will systematically study the eigenvalue distribution 
of the discretized Helmholtz operators as well as some of its variants. 
More specifically, we will investigate the spectrum of two types of matrices 1) the coefficient matrix $A$ in \eqref{eq:lin-sys} and 2) a related double-size matrix.

\textbf{Case One:}
For this simplest case, Algorithm \ref{alg:fci} is applied to solve \eqref{eq:lin-sys} directly.
The skew-Hermitian part of the coefficient matrix $A$ comes from 
absorbing boundary conditions or various types of damping.
Here we assume the skew-Hermitian part of $A$ 
is $-\mathrm{i}$ multiplied by a positive semi-definite matrix. That is,
\[
A = A_1 - \mathrm{i} A_2,
\]
where both $A_1$ and $A_2$ are Hermitian, and $A_2$ is positive semi-definite. This assumption has previously appeared in \cite[Equation (12)]{vangijzen07},
and also in \cite[Equation (1.7)]{gander-smallshift}. Under this assumption, it is easy to characterize the spectrum of $A$ as follows.

\begin{lemma}
If the Hermitian matrices $A_1, A_2$ satisfy $A_1+I\succeq 0$ and $A_2\succeq 0$, then the spectrum of $A=A_1 - \mathrm{i} A_2$ is contained in the closed rectangle
\[
\{\lambda\in\mathbb{C} : \operatorname{Re}(\lambda)\in[-1,\rho_1-1],
\quad \operatorname{Im}(\lambda)\in[-\rho_2,0] \},
\]
where $\rho_1=\rho(A_1+I)$ and $\rho_2=\rho(A_2)$.
\label{lem:spectrum-1x1}
\end{lemma}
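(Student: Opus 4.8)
The plan is to use the elementary fact that every eigenvalue of $A$ lies in its numerical range, and to exploit the Hermitian splitting $A = A_1 - \mathrm{i}A_2$ to control the real and imaginary parts separately. Let $\lambda$ be an eigenvalue of $A$ and pick a unit eigenvector $x$, so that $\lambda = x^*Ax = x^*A_1x - \mathrm{i}\,x^*A_2x$. Since $A_1$ and $A_2$ are Hermitian, the quadratic forms $x^*A_1x$ and $x^*A_2x$ are both real, and therefore $\operatorname{Re}(\lambda) = x^*A_1x$ and $\operatorname{Im}(\lambda) = -x^*A_2x$.

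Next I would bound each Rayleigh quotient by the extreme eigenvalues of the corresponding Hermitian matrix. For the imaginary part, $A_2 \succeq 0$ gives $0 \le x^*A_2x \le \lambda_{\max}(A_2) = \rho(A_2) = \rho_2$, hence $\operatorname{Im}(\lambda) \in [-\rho_2, 0]$. For the real part, apply the same reasoning to $A_1 + I$: the hypothesis $A_1 + I \succeq 0$ yields $0 \le x^*(A_1+I)x \le \rho(A_1+I) = \rho_1$, that is, $-1 \le x^*A_1x \le \rho_1 - 1$, so $\operatorname{Re}(\lambda) \in [-1, \rho_1 - 1]$. Intersecting the two intervals places $\lambda$ in the stated closed rectangle, which finishes the proof.

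There is essentially no obstacle here; the only points worth recording explicitly are the two standard facts that an eigenvalue equals the Rayleigh quotient evaluated at any associated eigenvector, and that for a Hermitian matrix $H$ the values $x^*Hx$ with $\|x\|_2 = 1$ fill exactly $[\lambda_{\min}(H), \lambda_{\max}(H)]$, with $\lambda_{\max}(H) = \rho(H)$ when $H \succeq 0$. A minor remark is that a unit eigenvector exists even if $A$ is defective, since $\lambda$ being an eigenvalue means $\ker(A - \lambda I) \neq \{0\}$; the indices $l_i$ appearing in Section~\ref{sec:fci} play no role in this bound.
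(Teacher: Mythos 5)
Your proof is correct and follows essentially the same route as the paper's: evaluate the Rayleigh quotient at a unit eigenvector, split $\lambda = v^HA_1v - \mathrm{i}\,v^HA_2v$ into real and imaginary parts, and bound each quadratic form by the extreme eigenvalues of the corresponding Hermitian matrix. The extra remarks you record (existence of a unit eigenvector, $\lambda_{\max}(H)=\rho(H)$ for $H\succeq 0$) are fine but the paper leaves them implicit.
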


\begin{proof}
Let $v$ be a unit right eigenvector of $A$. Then the corresponding eigenvalue $\lambda$ satisfies
\[
\lambda = v^HAv
= v^HA_1v - \mathrm{i}v^HA_2v.
\]
This implies that
\[
\operatorname{Re}(\lambda) =  v^HA_1v \in [-1, \rho_1-1], \quad
\operatorname{Im}(\lambda) = -v^HA_2v \in [-\rho_2, 0].
\]
\hfill
\end{proof}

\begin{remark}
One can always normalize a matrix to make the assumption $A_1+I\succeq 0$ hold.  
In Lemma \ref{lem:spectrum-1x1}, $\rho_1$ and $\rho_2$ represent the horizontal 
and vertical stretch of the spectrum, respectively, and $\rho_1/\rho_2$
measures the aspect ratio of this rectangle.
\end{remark}

\textbf{Case Two:}
We now consider a double-size linear system:
\begin{equation}
(\mathrm{i}C -I)
\begin{pmatrix}
\mathrm{i} u \\ u
\end{pmatrix}
=
\begin{pmatrix}
0\\f
\end{pmatrix},\quad
C = \begin{pmatrix}
       & I\\
-(A_1+I) & -A_2
\end{pmatrix}.
\label{eq:mat-2x2}
\end{equation}
One can check that $u$ in \eqref{eq:mat-2x2} is exactly the solution of \eqref{eq:lin-sys}.
We can apply Algorithm \ref{alg:fci} to solve the system \eqref{eq:mat-2x2} instead.
Although the size of the coefficient matrix $\mathrm{i}C-I$ in \eqref{eq:mat-2x2} is twice as large as that of \eqref{eq:lin-sys}, it could be less costly to solve \eqref{eq:mat-2x2} than \eqref{eq:lin-sys}.
This is because the spectrum of $\mathrm{i}C-I$ can be more compact than that of $A$ under some discretization schemes,
which is analyzed in the following theorem. 

\begin{theorem}
Following the same assumption as in Lemma \ref{lem:spectrum-1x1},
the spectrum of the matrix $C$ defined 
in \eqref{eq:mat-2x2} is contained in 
\[
\left\{
\mu\in\mathbb{C}: |\mu|\leq\frac{\rho_2}{2} + \sqrt{\left(\frac{\rho_2}2\right)^2 + \rho_1},
\quad \operatorname{Re}(\mu)\in [-\rho_2, 0]\right\},
\]
where $\rho_1$ and $\rho_2$ are defined in Lemma \ref{lem:spectrum-1x1}.
Furthermore, if $\rho_2=0$, then the set of eigenvalues of $C$ is
\[
\{\pm\mathrm{i}\sqrt{\lambda_i}: \lambda_i \text{ is an eigenvalue of }A_1+I\}.
\]
\label{thm:spectrum-2x2}
\end{theorem}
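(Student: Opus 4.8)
The plan is to turn the eigenproblem for the $2n\times 2n$ matrix $C$ into a quadratic eigenproblem of size $n$, and then into a scalar quadratic by testing against the eigenvector. Suppose $\mu$ is an eigenvalue of $C$ with eigenvector $\binom{x}{y}\neq 0$. The first block row of $C\binom{x}{y}=\mu\binom{x}{y}$ gives $y=\mu x$, and since the eigenvector is nonzero this forces $x\neq 0$. Substituting into the second block row, $-(A_1+I)x-A_2(\mu x)=\mu(\mu x)$, i.e.
\[
\mu^2 x+\mu A_2 x+(A_1+I)x=0 .
\]
Normalizing $\|x\|_2=1$ and multiplying on the left by $x^H$ yields the scalar relation $\mu^2+a\mu+b=0$, where $a:=x^HA_2x$ and $b:=x^H(A_1+I)x$. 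The hypotheses $A_2\succeq 0$, $A_1+I\succeq 0$ together with the definitions of $\rho_1,\rho_2$ give $a\in[0,\rho_2]$ and $b\in[0,\rho_1]$.

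For the magnitude estimate I would avoid the explicit quadratic formula and use $\mu^2=-(a\mu+b)$ directly: the triangle inequality gives $|\mu|^2\le a|\mu|+b\le\rho_2|\mu|+\rho_1$, so $t=|\mu|$ is a nonnegative number with $t^2-\rho_2 t-\rho_1\le 0$, hence $|\mu|\le\frac12\big(\rho_2+\sqrt{\rho_2^2+4\rho_1}\big)=\frac{\rho_2}{2}+\sqrt{(\rho_2/2)^2+\rho_1}$, which is exactly the claimed disk.

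For the strip $\operatorname{Re}(\mu)\in[-\rho_2,0]$ I would write $\mu=\sigma+\mathrm{i}\tau$ and split $\mu^2+a\mu+b=0$ into real and imaginary parts; the imaginary part reads $\tau(2\sigma+a)=0$, so either $\sigma=-a/2\in[-\rho_2/2,0]$, or $\tau=0$. In the latter case $\mu$ is a real root of $\sigma^2+a\sigma+b=0$ with $a,b\ge 0$; then the discriminant is automatically nonnegative and $0\le\sqrt{a^2-4b}\le a$, so both roots lie in $[-a,0]\subseteq[-\rho_2,0]$. Combining the two cases gives the strip. Finally, for $\rho_2=0$ we have $A_2=0$, hence $a=0$, and the vector equation becomes $(A_1+I)x=-\mu^2 x$; thus $\mu$ is an eigenvalue of $C$ iff $-\mu^2$ is an eigenvalue of $A_1+I$, and since $A_1+I\succeq 0$ each eigenvalue $\lambda_i\ge 0$ contributes precisely $\mu=\pm\mathrm{i}\sqrt{\lambda_i}$, yielding the stated set.

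I do not anticipate a real obstacle here; the points that need care are (i) justifying $x\neq 0$ so that passing to the scalar quadratic is legitimate, and (ii) handling the real-root versus complex-conjugate-root dichotomy cleanly for the strip estimate, rather than trying to force the explicit root formula through both regimes.
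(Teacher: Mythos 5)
Your proof is correct and takes essentially the same route as the paper's: both reduce the eigenproblem to the quadratic matrix polynomial $\mu^2 I+\mu A_2+(A_1+I)$ (the paper via the Schur complement of $\mu I-C$, you directly from the two block rows) and then bound the Rayleigh-quotient quadratic $\mu^2+a\mu+b=0$ with $a\in[0,\rho_2]$, $b\in[0,\rho_1]$ by the triangle inequality. Your strip argument via the imaginary part of that scalar quadratic is equivalent to the paper's definiteness argument for the Hermitian part of $E/(\operatorname{Im}(\mu)\mathrm{i})$, and for $\rho_2=0$ the paper computes the characteristic polynomial where you use the (equally valid) eigenvector correspondence $\mu\leftrightarrow-\mu^2$.
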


\begin{proof}
If $\mu$ is a non-zero eigenvalue of $C$, then the Schur complement $S$ of $\mu I - C$ 
\[
S = \mu I +  A_2 + \mu^{-1} (A_1+I)
\]
has to be singular. 

Denote $\mu S$ by $E$. For a non-zero vector $v$ in the null space of $E$, we have 
\[
0=\left|\frac{v^HEv}{v^Hv}\right|
\geq |\mu^2| - |\mu|\rho_2 - \rho_1.
\]
Thus, $|\mu|\leq\frac{\rho_2}{2} + \sqrt{\left(\frac{\rho_2}2\right)^2 + \rho_1}$.

If $\mu$ is real, then for any vector $w$, we have
\[
\left|\frac{w^HEw}{w^Hw}\right|
\geq |\mu^2| - |\mu|\rho_2.
\]
This implies that $\mu\in[-\rho_2, 0]$. Because otherwise $E\succ 0$ is a contradiction.
If $\mu$ has a non-zero imaginary part,
then the Hermitian part of $E/(\operatorname{Im}(\mu)\mathrm{i})$ is
\[
\frac{\operatorname{Im}(\mu^2)}{\operatorname{Im}(\mu)}I + A_2 = 
2\operatorname{Re}(\mu)I + A_2,
\]
which is positive definite for $\operatorname{Re}(\mu)> 0$
and is negative definite for $\operatorname{Re}(\mu)< -\rho_2/2$.
Therefore, $\operatorname{Re}(\mu)\in [-\rho_2, 0]$.

Finally, we consider the special case when $\rho_2=0$. Let $V^{-1}\Lambda V$ be the diagonalization of $A_1+I$.
If $\rho_2=0$, then $A_2=0$ and we have
\[
\begin{pmatrix}
V&\\&V
\end{pmatrix}
C
\begin{pmatrix}
V^{-1}&\\&V^{-1}
\end{pmatrix}
=
\begin{pmatrix}
 &  I\\
-\Lambda &
\end{pmatrix}.
\]
For the characteristic polynomial, we have
\[
\det(\mu I-C)
=
\det\begin{pmatrix}
\mu I &  -I\\
\Lambda & \mu I
\end{pmatrix} = \prod_i (\mu^2+\lambda_i).
\]
This shows $\{\pm\mathrm{i}\sqrt{\lambda_i}\}$ are the eigenvalues of $C$ when $\rho_2 = 0$.
\hfill
\end{proof}

\begin{remark}
Using the notation and result of Theorem \ref{thm:spectrum-2x2},
the spectrum of $\mathrm{i}C-I$ is contained in
\[
\left\{
\mu\in\mathbb{C}:
|\mu+1|\leq\frac{\rho_2}{2} + \sqrt{\left(\frac{\rho_2}2\right)^2 + \rho_1},\quad
-\rho_2\leq\operatorname{Im}(\mu)\leq 0
\right\}.
\]
We can compare $\mathrm{i}C-I$ with the matrix $A$ in terms of the spreading of spectrum. 
The spectrum of $\mathrm{i}C-I$ is contained in a 
$\rho_2 + \sqrt{\rho_2^2 + 4\rho_1}$ by $\rho_2$ rectangle;
but the spectrum of $A$ is contained in a
$\rho_1$ by $\rho_2$ rectangle.
$A$ can have a more elongated spectrum when $\rho_1/\rho_2$ is large.
Therefore, although $\mathrm{i}C-I$ is double in size, it may be more suitable 
for iterative solvers because the spectrum is less spread out. See Figure \ref{fig:Helmholtz-eigs} for a 2D example.
\end{remark}

\begin{figure}[ptbh]
\centering
\includegraphics[width=\textwidth, trim={1in 0 1in 0}, clip]{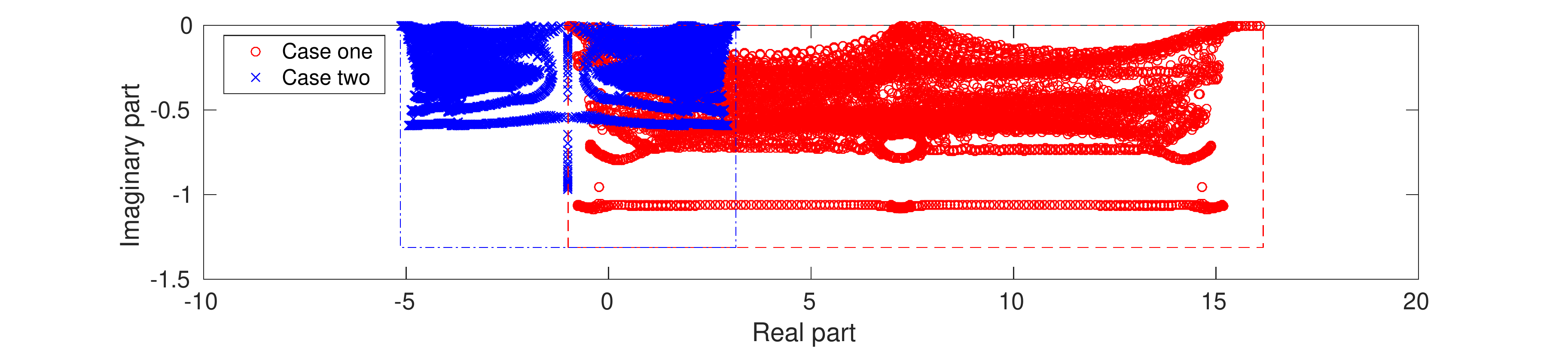}
\caption{Comparison of the spectrum between Case one (\ref{eq:lin-sys}) and Case two (\ref{eq:mat-2x2}).
The test matrix is based on finite difference method on a $100^2$ grid,
with absorbing layers near the boundary.
The eigenvalues are computed by \textsf{eig} in MATLAB.
The rectangles are the results of Lemma \ref{lem:spectrum-1x1} and Theorem \ref{thm:spectrum-2x2},
where $\rho_1\approx 17.2$, $\rho_2\approx 1.3$.}
\label{fig:Helmholtz-eigs}
\end{figure}

\section{Polynomial preconditioners for solving shifted problems}\label{sec:poly}
The application of Algorithm \ref{alg:fci} to solve discretized Helmholtz equations involves several linear system solutions with shifted problems:
\begin{equation}
(A-zI)y = f.
\label{eq:shifted-lin-sys}
\end{equation}
If $|\operatorname{Im}(z)|$ is large enough, then according to the previous section
$\mathrm{i}(A-zI)$ has a sign definite Hermitian part,
and many existing elliptic solvers or preconditioners can be used.
However, they become expensive to compute or
to store as $|\operatorname{Im}(z)|$ reduces.
In this section, we will propose several efficient polynomial preconditioning techniques to solve \eqref{eq:shifted-lin-sys} even when $|\operatorname{Im}(z)|$ is relatively small.

We can write the general form of a \emph{polynomial fixed-point iteration} of \eqref{eq:shifted-lin-sys} as
\begin{equation}
y^{(m+1)} = y^{(m)} + p(A-zI)r^{(m)},
\label{eq:fix-pt-iter}
\end{equation}
where $p$ is a polynomial,
and $r^{(m)}=f-(A-zI)y^{(m)}$ is the residual at the $m$th step.
If all the roots of the polynomial $p$ are known explicitly,
then \eqref{eq:fix-pt-iter} can be rewritten
as a cyclic Richardson iteration.
Motivated from Lemma \ref{lem:spectrum-1x1} in Section \ref{sec:spectrum},
we assume in this section that the spectrum of $A$ is contained in a closed rectangle $\mathcal{B}$.
For fixed $z$, we define
\begin{equation}
R(\lambda) = 1-(\lambda-z)p(\lambda-z).
\label{eq:res-poly}
\end{equation}
$R$ is usually called the \emph{residual polynomial}.
A desirable polynomial $p$ should solve
the following minimax problem in some special set of polynomials denoted by $\mathcal{P}$:
\begin{equation}
\min_{p\in\mathcal{P}}\max_{\lambda\in \mathcal{B}}|R(\lambda)|
=
\min_{p\in\mathcal{P}}\max_{\lambda\in \partial\mathcal{B}}|R(\lambda)|.
\label{eq:minimax}
\end{equation}

\subsection{Stationary Richardson iteration}\label{sub:richardson}
The scheme \eqref{eq:fix-pt-iter} is a stationary Richardson iteration if $p$ is a complex constant.
The minimax problem \eqref{eq:minimax} is essentially solved by the method in \cite{opfer}.
Here, we present a slight generalization to the case of rectangular spectrum.
The following theorem shows how to choose the complex constant
for optimal convergence rate.

\begin{theorem}
Let $\mathcal{B}\subset\mathbb{C}$ be the rectangle with vertices
$\{b_1, b_2, \beta_1, \beta_2\}$ such that
\[
\operatorname{Im}(b_j)=0, \quad \operatorname{Im}(\beta_j) < 0,\quad j\in\{1,2\}.
\]
Assume that $p\in\mathbb{C}$,
the minimax value of $R(\lambda)$ in (\ref{eq:res-poly})--(\ref{eq:minimax})
is taken on the vertices
\[
\min_{p\in\mathbb{C}}\max_{\lambda\in\mathcal{B}}|R(\lambda)|
=\min_{p\in\mathbb{C}}\max_{\lambda\in\{b_1, b_2, \beta_1, \beta_2\}}|R(\lambda)|.
\]
Furthermore, if $\operatorname{Im}(z)\not\in[\operatorname{Im}(\beta_1), 0]$ and $z$
is enclosed by the circumcircle of $\mathcal{B}$,
then the minimax value of $R(\lambda)$ equals $\frac{|\alpha_1 - \alpha_2|}{|\alpha_1|+|\alpha_2|}$
at $p^\ast=\frac{|\alpha_1|/\alpha_1 + |\alpha_2|/\alpha_2}{|\alpha_1|+|\alpha_2|}$,
where
\[
\alpha:= (\alpha_1, \alpha_2) =
\left\{
\begin{split}
(b_1-z,b_2-z) & \quad \text{ if } \operatorname{Im}(z)>0,\\
(\beta_1-z,\beta_2-z) & \quad \text{ otherwise}.\\
\end{split}
\right.
\]
\label{thm:conv-richardson}
\end{theorem}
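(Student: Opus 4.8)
The plan is to first settle the vertex-reduction assertion by convexity, then reduce the four-vertex minimax to an explicit two-vertex problem, and finally show that the resulting constant $p^\ast$ is in fact optimal over all four vertices, using the circumcircle hypothesis to control the two ``inactive'' ones.

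\emph{Reduction to vertices.} For a fixed constant $p$ the residual polynomial $R(\lambda)=1-p(\lambda-z)$ is affine in $\lambda$, hence $\lambda\mapsto|R(\lambda)|$ is a convex function on $\mathbb{C}\cong\mathbb{R}^2$; its maximum over the compact convex rectangle $\mathcal{B}$ is therefore attained at an extreme point, i.e.\ at one of $b_1,b_2,\beta_1,\beta_2$. Taking the minimum over $p\in\mathbb{C}$ yields the first displayed identity (and, incidentally, $\max_{\mathcal B}|R|=\max_{\partial\mathcal B}|R|$ in \eqref{eq:minimax}).

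\emph{The two active vertices.} Treat the case $\operatorname{Im}(z)>0$, so $\alpha=(b_1-z,b_2-z)$, and set $\delta_0:=|\alpha_1-\alpha_2|/(|\alpha_1|+|\alpha_2|)$. Then $\max_{j\in\{1,2\}}|R(b_j)|=\max_{j}|\alpha_j|\,\big|p-\alpha_j^{-1}\big|$, a weighted Chebyshev-center problem for the two points $\alpha_1^{-1},\alpha_2^{-1}$ with weights $|\alpha_1|,|\alpha_2|$. Orthogonally projecting any $p$ onto the line through $\alpha_1^{-1}$ and $\alpha_2^{-1}$ does not increase either weighted distance, so the minimizer may be sought on that line; there the maximum of the two ``V-shaped'' functions is minimized at the unique balanced point interior to the segment $[\alpha_1^{-1},\alpha_2^{-1}]$, and a direct computation identifies this point with $p^\ast$ of the statement and the minimal value with $\delta_0$. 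Since $\{b_1,b_2\}\subset\mathcal{B}$, this already gives $\min_p\max_{\mathcal B}|R|\ge\delta_0$.

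\emph{Feasibility of $p^\ast$.} It remains to prove $|R(\lambda;p^\ast)|\le\delta_0$ for every $\lambda\in\mathcal{B}$. Since $z$ has positive imaginary part it does not lie on the segment $[b_1,b_2]$, so $p^\ast\neq0$ and $\delta_0<1$, and $R(\,\cdot\,;p^\ast)$ is a nonconstant affine map; hence $\{\lambda:|R(\lambda;p^\ast)|\le\delta_0\}$ is a closed disk $\overline{D}$ with center $\lambda_0=z+1/p^\ast$ and radius $r=\delta_0/|p^\ast|$. The balancing above means $b_1,b_2\in\partial\overline{D}$, while $|R(z;p^\ast)|=1$ gives $|\lambda_0-z|=r/\delta_0>r$, so $z\notin\overline{D}$. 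By convexity $\mathcal{B}\subseteq\overline{D}$ iff $\beta_1,\beta_2\in\overline{D}$; since $\lambda_0$ is equidistant from $b_1,b_2$ it lies on their perpendicular bisector, and an elementary computation shows $\mathcal{B}\subseteq\overline{D}$ is equivalent to $\operatorname{Im}(\lambda_0)\le\tfrac12\operatorname{Im}(\beta_1)$. Now the definition of $\delta_0$ reads $|b_1-z|+|b_2-z|=|b_1-b_2|/\delta_0$, i.e.\ $z$ lies on the ellipse with foci $b_1,b_2$, semi-major axis $a/\delta_0$ and semi-minor axis $\tilde b=(a/\delta_0)\sqrt{1-\delta_0^2}$, where $2a=|b_1-b_2|$; combining ``$z$ on this ellipse'' with $|\lambda_0-b_j|=r$ and $|\lambda_0-z|=r/\delta_0$ and eliminating the auxiliary unknowns yields the clean identity $\operatorname{Im}(\lambda_0)=-\operatorname{Im}(z)\,a^2/\tilde b^{\,2}$, so the required inequality becomes $\tilde b^{\,2}\le 2\operatorname{Im}(z)\,a^2/|\operatorname{Im}(\beta_1)|$. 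This is exactly where I would invoke the hypothesis that $z$ is enclosed by the circumcircle of $\mathcal{B}$: writing that containment as $|z-c|^2+\operatorname{Im}(z)\,|\operatorname{Im}(\beta_1)|<a^2$ with $c=\tfrac12(b_1+b_2)\in\mathbb{R}$, and substituting the proposed bound into the quadratic $u^2+u\big(a^2-|z-c|^2\big)-\operatorname{Im}(z)^2a^2=0$ satisfied by $u=\tilde b^{\,2}$, the claim collapses to a one-line positivity estimate.

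\emph{Conclusion.} The previous two steps give $\max_{\mathcal B}|R(\,\cdot\,;p^\ast)|=\delta_0$, so $p^\ast$ attains the minimax and its value is $|\alpha_1-\alpha_2|/(|\alpha_1|+|\alpha_2|)$. The case $\operatorname{Im}(z)<\operatorname{Im}(\beta_1)$ reduces to the one just treated by applying the anti-conformal isometry $\lambda\mapsto\overline{\lambda-\mathrm{i}\operatorname{Im}(\beta_1)}$, which is a symmetry of the minimax problem (it sends $\mathcal{B}$ to a congruent rectangle with $z$ now in the open upper half-plane, preserves $|R|$ after conjugating $p$, and preserves the circumcircle hypothesis) and turns $(\beta_1-z,\beta_2-z)$ into the complex conjugate of the $\alpha$ of the transformed problem; matching the two formulas recovers the stated $p^\ast$ and value. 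I expect the geometric core of the feasibility step --- recognizing the sublevel set as a circle through $b_1,b_2$, locating its center via the focal-ellipse description of $\delta_0$, and seeing why ``$z$ inside the circumcircle'' is precisely the hypothesis needed to force the remaining two vertices inside --- to be the main obstacle; the rest is routine.
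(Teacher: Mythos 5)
Your proof is correct and follows the same overall decomposition as the paper's: reduce to the four vertices by convexity of the affine map $\lambda\mapsto|R(\lambda)|$, solve the two-point minimax on $[b_1,b_2]$ to get $p^\ast$ and the value $\delta_0=\frac{|\alpha_1-\alpha_2|}{|\alpha_1|+|\alpha_2|}$, and then show $\beta_1,\beta_2$ are inactive at $p^\ast$. The difference lies in how that last, crucial step is executed. The paper argues synthetically in one stroke: both the level circle $\{|R(\cdot\,;p^\ast)|=\delta_0\}$ centered at $o^\ast=z+1/p^\ast$ and the circumcircle of $\mathcal{B}$ pass through $b_1,b_2$; the former excludes $z$ while the latter (by hypothesis) contains it, so $o^\ast$ sits below the circumcenter, hence below the horizontal midline of $\mathcal{B}$, hence $|\beta_j-o^\ast|<|b_j-o^\ast|$. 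You instead compute $\operatorname{Im}(\lambda_0)$ explicitly via the focal ellipse through $z$, reducing the containment $\mathcal{B}\subseteq\overline{D}$ to $\tilde b^{\,2}\le 2\operatorname{Im}(z)a^2/|\operatorname{Im}(\beta_1)|$ and then to a sign check on the quadratic satisfied by $u=\tilde b^{\,2}$. I verified that this closes: with $g(u)=u^2+(a^2-|z-c|^2)u-\operatorname{Im}(z)^2a^2$ one gets $g\bigl(2\operatorname{Im}(z)a^2/|\operatorname{Im}(\beta_1)|\bigr)>0$ directly from $|z-c|^2+\operatorname{Im}(z)|\operatorname{Im}(\beta_1)|<a^2$, and your identity $\operatorname{Im}(\lambda_0)=-\operatorname{Im}(z)a^2/\tilde b^{\,2}$ is correct. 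Your route also re-derives the two-point result that the paper simply cites from Opfer--Schober, so it is self-contained; the price is considerably more computation where the paper's circle-family argument settles the matter in two lines. Both hypotheses ($z$ off the strip and $z$ inside the circumcircle) are used in the same places in both arguments.
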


\begin{proof}
Let $o = 1/p + z$ be the root of the residual polynomial $R(\lambda) = 1-(\lambda-z)p$.
The absolute value of $R(\lambda)$ is related to the distance from $o$
\[
|R(\lambda)| = |1-(\lambda-z)p| = \left|1-\frac{\lambda-z}{o-z}\right|
= \frac{|\lambda - o|}{|z-o|}.
\]
For fixed $z$ and $o$, $|R(\lambda)|$ is convex,
so the maximum value on each line segment is taken on a vertex.
$\partial\mathcal{B}$ has four sides,
and the maximum value is taken on one of the four vertices.

For the remaining part of the theorem,
it suffices to prove for the case $\operatorname{Im}(z)>0$
because the other case follows from symmetry.
The minimax problem restricted to a line segment is solved in \cite[Example 5.1]{opfer},
which suggests
\[
\min_{p\in\mathbb{C}}\max_{\lambda\in [b_1,b_2]} |1-(\lambda-z) p |
= \frac{|\alpha_1 - \alpha_2|}{|\alpha_1|+|\alpha_2|} < 1,
\]
and the optimal value of $p$ is $p^\ast$.
For circles containing $b_1$, $b_2$,
the one centered at $o^\ast = 1/p^\ast + z$ does not contain $z$
because $|b_1 - o^\ast|=|b_2 - o^\ast|<|z-o^\ast|$,
but the circumcircle of $\mathcal{B}$ contains $z$.
So, $o^\ast$ is closer to $\beta_1, \beta_2$ than to $b_1, b_2$. That is,
\[
|\beta_1 - o^\ast| = |\beta_2- o^\ast|< |b_1 - o^\ast| = |b_2 - o^\ast|.
\]
Therefore,
\[
\frac{|b_1-o^\ast|}{|z-o^\ast|}
=
\min_{p\in\mathbb{C}}\max_{\lambda\in\{b_1, b_2, \beta_1, \beta_2\}}|R(\lambda)|
=\min_{p\in\mathbb{C}}\max_{\lambda\in\mathcal{B}}|R(\lambda)|.
\]
$p^\ast$ solves the minimax problem \eqref{eq:minimax} for $\mathcal{B}$.
\hfill\end{proof}

\begin{remark}
If $b_1b_2<0$, which means the matrix $A$ in \eqref{eq:shifted-lin-sys} is indefinite,
for an imaginary shift $z$ (or $|\operatorname{Re}(z)|$ is much smaller than $|b_1|$ or $|b_2|$),
the convergence rate is close to
\[
\frac{|\alpha_1 - \alpha_2|}{|\alpha_1|+|\alpha_2|}
= \frac{|b_1|+|b_2|}{\sqrt{b_1^2 + \operatorname{Im}^2(z)} + \sqrt{b_2^2 + \operatorname{Im}^2(z)}}.
\]
Using the Taylor expansion of $\sqrt{1+x^2}$,
one can check that an $O(|\operatorname{Im}(z)|^{-2})$ number of iterations
is needed to reach certain relative accuracy.
This result can be improved by considering high-order polynomials.
\end{remark}

\subsection{High-order polynomials}\label{sub:high-order}
High-order polynomials $p(\lambda-z)$ have the capability to improve the convergence rate.
For well-conditioned problems,
existing work such as Chebyshev iterations
\cite{wrigley, manteuffel77,manteuffel78,gutknecht}
and Leja points \cite{reichel} can select
the roots of the polynomial near the spectrum for asymptotic optimal convergence.
Since the shifted system \eqref{eq:shifted-lin-sys} may not be sufficiently well conditioned,
we will design such a polynomial from the approximation of the exponential function.

For given $z$, the residual polynomial $R(\lambda)$ defined in \eqref{eq:res-poly}
can be reformulated as
\begin{equation}
R(\lambda) = \frac{\tilde{p}(\lambda)}{\tilde{p}(z)},
\end{equation}
for some polynomial $\tilde{p}$
because this form also takes the value of $1$ at $\lambda=z$.
The ideal polynomial should yield small value of $|R(\lambda)|$ at every eigenvalue $\lambda$.

The choice of $\tilde{p}$ is motivated from the exponential function.
If $\operatorname{Im}(\lambda - z)$ has a fixed sign for $\lambda\in\mathcal{B}$,
then with a suitable choice of $\delta\in\mathbb{R}$,
the following quantity can be arbitrarily small
\[
\frac{|e^{-\mathrm{i}\delta\lambda|}}{|e^{-\mathrm{i}\delta z}|} =
e^{\delta\operatorname{Im}(\lambda - z)}.
\]
The simplest choice of $\tilde{p}$ is based on the Taylor expansion of the exponential function
\begin{equation}
\tilde{p}^{(q)}(\lambda) = \sum_{j=0}^{q}\frac{(-\mathrm{i}\delta(\lambda-z_0))^j}{j!},
\label{eq:exp-taylor}
\end{equation}
where $z_0$ is the center of the Taylor expansion.
Then the polynomial $p$ of degree $q-1$ in \eqref{eq:fix-pt-iter} has the form 
\[
p(\lambda-z) = \left(1-\frac{\tilde{p}^{(q)}(\lambda)}{\tilde{p}^{(q)}(z)}\right)/(\lambda-z).
\]
Since it is complicated to expand the above expression in the power basis, we propose an alternative iteration scheme as follows:

\begin{equation}
\begin{split}
k_1 & = -\mathrm{i}\delta\left((A-z_0I)y^{(m)} - f\right),\\
k_j & = \frac{-\mathrm{i}\delta}{j}
\left((A-z_0I)k_{j-1}-\frac{(-\mathrm{i}\delta(z-z_0))^{j-1}}{(j-1)!}f\right),\quad
j\in\{2,3,\cdots,q\},\\
y^{(m+1)} & = \frac{y^{(m)} + \sum_{j=1}^{q} k_j }{\tilde{p}^{(q)}(z)}.
\end{split}
\label{eq:fix-pt-exp}
\end{equation}

In the following theorem, we show that the above scheme is equivalent to a fixed point iteration.
\begin{theorem}
The solution of (\ref{eq:shifted-lin-sys}) is a fixed point of (\ref{eq:fix-pt-exp}).
The error at the $m$th step satisfies
\[
y^{(m)} - y
=
\left(
\frac{\tilde{p}^{(q)}(A)}
{\tilde{p}^{(q)}(z)}
\right)^m
\left(
y^{(0)} - y
\right).
\]
\label{thm:fix-pt-exp}
\end{theorem}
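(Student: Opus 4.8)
The plan is to verify two claims: first, that the solution $y$ of \eqref{eq:shifted-lin-sys} is a fixed point of the iteration \eqref{eq:fix-pt-exp}, and second, that the error propagates by the matrix $\tilde p^{(q)}(A)/\tilde p^{(q)}(z)$ at each step. The key observation is that the auxiliary vectors $k_1,\dots,k_q$ in \eqref{eq:fix-pt-exp} are constructed precisely so that, after dividing by $\tilde p^{(q)}(z)$, the update $y^{(m+1)}$ equals $y^{(m)}+p(A-zI)r^{(m)}$ with $p$ the polynomial of degree $q-1$ defined just before \eqref{eq:fix-pt-exp}. So I would first establish the identity
\[
y^{(m)} + \sum_{j=1}^{q} k_j
= \tilde p^{(q)}(z)\, y^{(m)} + \bigl(\tilde p^{(q)}(z) I - \tilde p^{(q)}(A)\bigr)(A-zI)^{-1} r^{(m)},
\]
which is the algebraic heart of the theorem; dividing by $\tilde p^{(q)}(z)$ then gives $y^{(m+1)} = y^{(m)} + p(A-zI) r^{(m)}$.

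The main work is proving that identity. First I would introduce $y = A^{-1}f$ wait — rather $y = (A-zI)^{-1} f$, the true solution, and set $e^{(m)} = y^{(m)} - y$, so that $r^{(m)} = f - (A-zI)y^{(m)} = -(A-zI)e^{(m)}$. I would then show by induction on $j$ that
\[
k_j = \frac{(-\mathrm i\delta)^j}{j!}\,\bigl((A-z_0 I)^j - (z-z_0)^j I\bigr) e^{(m)}
\quad\text{wait, this needs care: let me instead track}\quad
k_j = c_j (A - z I)\,\text{-related terms}.
\]
More carefully: I expect the clean statement to be that $\sum_{j=1}^{q} k_j = \bigl(\tilde p^{(q)}(z) I - \tilde p^{(q)}(A)\bigr) e^{(m)}$, using $\tilde p^{(q)}(\lambda) = \sum_{j=0}^q \tfrac{(-\mathrm i\delta(\lambda-z_0))^j}{j!}$ and $r^{(m)} = -(A - zI) e^{(m)}$ together with the telescoping structure of the recursion for $k_j$. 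I would prove this by induction: check $k_1 = -\mathrm i\delta(A-z_0 I)e^{(m)} + \mathrm i\delta (z - z_0) e^{(m)}$ wait, need $k_1 = -\mathrm{i}\delta((A-z_0 I)y^{(m)} - f)$ and $f = (A-zI)y$, so $k_1 = -\mathrm i\delta(A-z_0 I)e^{(m)} - \mathrm i\delta((A-z_0 I) - (A - zI))y = -\mathrm i\delta(A-z_0I)e^{(m)} - \mathrm i\delta(z - z_0)y$; the stray $y$ term is where one must be attentive. The recursion for $k_j$ with its $-\tfrac{(-\mathrm i\delta(z-z_0))^{j-1}}{(j-1)!}f$ correction term is designed exactly to keep the $y$-dependence telescoping into the value $\tilde p^{(q)}(z) y$ at the end, leaving the $e^{(m)}$-part as $\tilde p^{(q)}(z) I - \tilde p^{(q)}(A)$ applied to $e^{(m)}$ — modulo the factor structure. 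Once the identity is in hand, the final step is immediate: $y^{(m+1)} - y = \tfrac{1}{\tilde p^{(q)}(z)}\bigl(\tilde p^{(q)}(z) y^{(m)} + (\tilde p^{(q)}(z)I - \tilde p^{(q)}(A))e^{(m)}\bigr) - y = e^{(m)} + \tfrac{1}{\tilde p^{(q)}(z)}(\tilde p^{(q)}(z)I - \tilde p^{(q)}(A))e^{(m)} = \tfrac{\tilde p^{(q)}(A)}{\tilde p^{(q)}(z)} e^{(m)}$; wait, the signs must combine so that only $\tilde p^{(q)}(A)/\tilde p^{(q)}(z)$ survives, and iterating $m$ times gives the stated formula. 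The fixed-point claim follows since $e^{(m)} = 0$ forces $e^{(m+1)} = 0$.

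The main obstacle I anticipate is bookkeeping the $f$-dependent correction terms in the $k_j$ recursion: one must substitute $f = (A - zI) y$ and verify that all the spurious $y$-contributions from the inner products assemble exactly into $\tilde p^{(q)}(z)\, y$ (so that it cancels the lone $y^{(m)}$ term's non-error part and the division by $\tilde p^{(q)}(z)$ leaves $-\, y$), with no leftover terms. This is a careful but routine induction; once the inductive hypothesis for $k_1 + \dots + k_j$ is correctly formulated, the step reduces to matching coefficients of powers of $(-\mathrm i\delta)$ against the partial sums defining $\tilde p^{(q)}$. I would present the induction compactly, stating the closed form for the partial sum $\sum_{l=1}^j k_l$ and verifying the single inductive step, then read off both conclusions of the theorem.
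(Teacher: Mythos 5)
Your plan is essentially the paper's proof: substitute $f=(A-zI)y$ and show by induction on $j$ that $k_j = \frac{(-\mathrm{i}\delta(z-z_0))^j}{j!}\,y + \frac{(-\mathrm{i}\delta(A-z_0I))^j}{j!}\,(y^{(m)}-y)$, so that $y^{(m)}+\sum_{j=1}^q k_j = \tilde p^{(q)}(z)\,y + \tilde p^{(q)}(A)\,(y^{(m)}-y)$ --- which is exactly equivalent to your first displayed master identity --- and both conclusions follow upon dividing by $\tilde p^{(q)}(z)$. That first identity is correct and the induction goes through; only your later tentative reformulation $\sum_j k_j=\bigl(\tilde p^{(q)}(z)I-\tilde p^{(q)}(A)\bigr)e^{(m)}$ and the sign in your final assembly (both of which you flagged yourself) need correcting, since $(A-zI)^{-1}r^{(m)}=-e^{(m)}$.
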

\begin{proof}
If $y^{(m)} = y$ is the solution of (\ref{eq:shifted-lin-sys}),
then $k_1 = -\mathrm{i}\delta(z-z_0)y$.
If $k_{j-1} = \frac{(-\mathrm{i}\delta(z-z_0))^{j-1}}{(j-1)!}y$,
then
\begin{align*}
k_{j} &=
\frac{-\mathrm{i}\delta}{j}\left((A-z_0I)k_{j-1}-\frac{(-\mathrm{i}\delta(z-z_0))^{j-1}}{(j-1)!}g
\right)\\
&=\frac{(-\mathrm{i}\delta(z-z_0))^{j-1}}{(j-1)!}
\frac{-\mathrm{i}\delta}{j}\left((A-z_0I)y - g\right)\\
&=
\frac{(-\mathrm{i}\delta(z-z_0))^j}{j!}y.
\end{align*}
For the next iterate,
\[
y^{(m+1)}
=
\frac{\sum_{j=0}^{q}(-\mathrm{i}\delta(z-z_0))^j/j!}
{\tilde{p}^{(q)}(z)} y = y.
\]
Therefore, $y$ is a fixed point of (\ref{eq:shifted-lin-sys}).

If $y^{(m)} \neq y$,
then from a similar induction argument we have
\[
k_{j} - \frac{(-\mathrm{i}\delta(z-z_0))^j}{j!}y
=\frac{(-\mathrm{i}\delta(A-z_0I))^j}{j!}
\left(y^{(m)}-y\right), \quad 1\leq j\leq q.
\]
Therefore,
\[
y^{(m+1)} - y
=
\frac{\tilde{p}^{(q)}(A)}
{\tilde{p}^{(q)}(z)}
\left(
y^{(m)} - y
\right).
\]
\hfill\end{proof}

The optimal parameters $z_0$ and $\delta$ in the scheme \eqref{eq:fix-pt-exp} can be computed by solving an optimization problem for each fixed polynomial degree $q$. Here we propose a heuristic to simplify this procedure. We choose $z_0$ to guarantee robustness and $\delta$ for fast convergence.
By robustness we mean for sufficiently small $|\delta|$,
the spectral radius
$
\rho(\tilde{p}^{(q)}(A)/\tilde{p}^{(q)}(z))
$
is less than $1$. This is done by considering the following equation:
\[
\left|\frac{\tilde{p}^{(q)}(\lambda)}{\tilde{p}^{(q)}(z)}\right|^2=
\left|
\frac{1-\mathrm{i}\delta(\lambda-z_0)}
{1-\mathrm{i}\delta(z-z_0)}
\right|^2
+o(|\delta|)
=
\frac{1+2\delta\operatorname{Im}(\lambda-z_0)}
{1+2\delta\operatorname{Im}(z-z_0)}
+o(|\delta|).
\]
If we fix $\operatorname{Im}(z_0) = \operatorname{Im}(z)$ 
and assume $\operatorname{Im}(\lambda-z_0)$ has a fixed sign,
then we can always find some $\delta$ with a small absolute value
such that $\delta\operatorname{Im}(\lambda-z_0)<0$,
which controls the spectral radius. Since $z_0 = (b_1+b_2)/2+\mathrm{i}\operatorname{Im}(z)$ is the optimal choice for $q=1$, we will always follow this choice for high order polynomials. 
After that, $\delta$ is determined by numerically minimizing the convergence rate
\begin{equation}
\nu = \min_{\delta\in\mathbb{R}}\max_{\lambda\in\partial\mathcal{B}}
\left|\frac{\tilde{p}(\lambda)}{\tilde{p}(z)}\right|.
\label{eq:fix-pt-conv}
\end{equation}
This is a 2D optimization problem which is easy to solve.
Table \ref{tab:exp-rate-order} compares the convergence rate $\nu$ for different order $q$.
We prefer choosing $q$ with a minimum $\nu^{1/q}$ value since this quantity gives the fastest converging method for a given number of matrix-vector products.

\begin{table}[ptbh]
\caption{Convergence rate of (\ref{eq:fix-pt-exp}) for different order $q$.
The spectrum of $A$ in (\ref{eq:shifted-lin-sys}) is
within a rectangle $\mathcal{B}$
with the real part in $[-1,2.8]$
and the imaginary part in $[-0.65,0]$,
and the shift is $z = \mathrm{i}$.
$\delta^\ast$ is the optimal choice of $\delta$.
$\nu$ is the estimated convergence rate,
and $\nu^{1/q}$ quantifies the convergence rate per matrix-vector product.
}\label{tab:exp-rate-order}
\begin{center}
\tabcolsep=1.6mm\renewcommand{\arraystretch}{1.1}
\begin{tabular}
[c]{|c|c|c|c|}
\hline
$q$ & $\delta^\ast$ & $\nu$ &$\nu^{1/q}$ \\
\hline
1 & 0.250 & 0.866 & 0.866 \\
\hline
2 & 0.688 & 0.537 & 0.733 \\
\hline
3 & 0.750 & 0.530 & 0.810 \\
\hline
4 & 0.750 & 0.547 & 0.860 \\
\hline
5 & 0.938 & 0.416 & 0.839 \\
\hline
\end{tabular}
\end{center}
\end{table}

We are also concerned with how the cost increases
as $|\operatorname{Im}(z)|$ reduces in \eqref{eq:shifted-lin-sys}.
Table \ref{tab:exp-nmatvec} gives the estimated number
of matrix-vector products for reducing the residual by $10^{2}$.
The cost of the stationary Richardson iteration ($q=1$) quadruples
as the imaginary shift reduces by 1/2.
For high-order methods such as $q=3$, the results are much better.
The cost roughly doubles when the imaginary shift reduces by 1/2.
As the imaginary shift decreases,
one might want to increase $q$ slightly to approach a desirable performance.

\begin{table}[ptbh]
\caption{Estimated number of matrix-vector products for reducing the residual by $10^{2}$.
The spectrum of $A$ is the same as Table \ref{tab:exp-rate-order}.}
\label{tab:exp-nmatvec}
\begin{center}
\tabcolsep=1.6mm\renewcommand{\arraystretch}{1.1}
\begin{tabular}
[c]{|c|c|c|c|c|}
\hline
& $z=\mathrm{i}$ & $z=\mathrm{i}/2$ & $z=\mathrm{i}/4$ & $z=\mathrm{i}/8$\\
\hline
$q=1$ & 33 & 110 & 420 & 1656\\
\hline
$q=2$ & 16 &  38 & 110 &  304\\
\hline
$q=3$ & 24 &  30 &  60 &  135\\
\hline
$q=4$ & 32 &  44 &  72 &  128\\
\hline
$q=5$ & 30 &  60 & 120 &  180\\
\hline
\end{tabular}
\end{center}
\end{table}

\section{Solving the inner problem}\label{sec:inner}
After the discussion of the outer problem,
the next topic is the convergence of GMRES iterations 
for the inner problem in Algorithm \ref{alg:fci}.
We analyze the problem by studying 
the distribution of eigenvalues with small magnitude for the Helmholtz equation.

\subsection{Characterization of small eigenvalues}
It is important to study near-zero eigenvalues because 
they govern the conditioning of the problem.
A stability estimate is proved in \cite{melenk, cummings, gander-smallshift}
for the constant-coefficient Helmholtz equation
in a star-shaped domain $\Omega$ with the impedance boundary condition.
The part of the statement we want to highlight is that
\[
\omega \|u\|_{L^2(\Omega)} \leq \alpha \|f\|_{L^2(\Omega)},
\]
where $u$ is the solution of the right-hand side $f$,
$\alpha$ is a constant independent from the angular frequency $\omega$.
This result implies that \emph{all the eigenvalues of $A$ are at least $O(\omega)$ distance away from the origin.}

For the constant-coefficient case in a hypercube,
we can further describe the distribution of eigenvalues with the impedance boundary condition.
In fact, we can show that \emph{the imaginary part is at least $O(\omega)$
for eigenvalues with magnitude smaller than $\omega^2$.}
Let the wavespeed be normalized as one in the $d$-dimensional unit hypercube $[0,1]^d$.
Consider the eigenvalue problem in multiple dimensions ($d\geq 2$),
and an eigenpair $(\lambda,v)$ satisfies
\begin{equation}
\left\{
\begin{split}
-\Delta v - \omega^2 v &= \lambda v\quad \text{in }[0,1]^d,\\
\partial_n v - \mathrm{i}\omega v &= 0\quad \text{on } \partial[0,1]^d,
\end{split}
\right.
\label{eq:eig-ibc}
\end{equation}
where $\partial_n$ means taking the directional derivative along the outward unit normal.

\begin{theorem}
For any fixed $\rho\in (0,1)$,
there exists a positive constant $s$ 
such that every eigenvalue $\lambda$ of (\ref{eq:eig-ibc}) with $|\lambda|\leq\rho\omega^2$
satisfies $|\operatorname{Im}(\lambda)|\geq s \omega$ for sufficiently large $\omega$.
\label{thm:dist-eig-ibc}
\end{theorem}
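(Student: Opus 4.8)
The plan is to combine separation of variables with an energy identity and a quantitative study of the one–dimensional impedance eigenvalues. Because the domain is the unit hypercube and the impedance condition acts coordinate–wise, every eigenpair of \eqref{eq:eig-ibc} may be taken of product form $v(x)=\prod_{j=1}^d w_j(x_j)$, where each $w_j$ solves the one–dimensional problem $-w_j''=\mu_j w_j$ on $[0,1]$ with $w_j'(0)=-\mathrm{i}\omega w_j(0)$, $w_j'(1)=\mathrm{i}\omega w_j(1)$, and $\lambda=\sum_{j}\mu_j-\omega^2$; equivalently, every eigenvalue of \eqref{eq:eig-ibc} is a sum of $d$ one–dimensional impedance eigenvalues shifted by $-\omega^2$. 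First I would record the two scalar identities obtained by testing $-w_j''=\mu_j w_j$ against $\bar w_j$ and integrating by parts:
\[
\operatorname{Re}\mu_j=\frac{\|w_j'\|_{L^2(0,1)}^2}{\|w_j\|_{L^2(0,1)}^2}\ \ge\ 0,\qquad
\operatorname{Im}\mu_j=-\,\omega\,\frac{|w_j(0)|^2+|w_j(1)|^2}{\|w_j\|_{L^2(0,1)}^2}\ \le\ 0 .
\]
In particular all $\mu_j$ lie in the closed fourth quadrant, so $|\operatorname{Im}\lambda|=\sum_j|\operatorname{Im}\mu_j|$, and it suffices to produce one index $j_0$ with $|\operatorname{Im}\mu_{j_0}|\gtrsim\omega$.

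Next I would localize such an index. Since $\operatorname{Re}\mu_j\ge0$ for all $j$ and $\sum_j\operatorname{Re}\mu_j=\omega^2+\operatorname{Re}\lambda$, the hypothesis $|\lambda|\le\rho\omega^2$ gives $(1-\rho)\omega^2\le\sum_j\operatorname{Re}\mu_j\le(1+\rho)\omega^2$, so some index $j_0$ obeys $\kappa_0\omega^2\le\operatorname{Re}\mu_{j_0}\le(1+\rho)\omega^2$ with $\kappa_0:=(1-\rho)/d>0$. Writing $\mu_{j_0}=k^2$, $k=a+\mathrm{i}b$, and choosing the representative with $a\ge0$ (so that $\operatorname{Re}\mu_{j_0}=a^2-b^2>0$ forces $a>|b|\ge0$), the one–dimensional boundary value problem is solvable iff $e^{\mathrm{i}k}=\pm\frac{k+\omega}{k-\omega}$; taking moduli and simplifying yields the single real relation
\[
e^{2|b|}-1=\frac{4a\omega}{(a-\omega)^2+b^2},
\]
which also re-confirms $b\le0$, consistent with the sign of $\operatorname{Im}\mu_{j_0}$.

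The heart of the argument is then to bound $|b|$ below by a constant depending only on $\rho$ and $d$. Putting $t=|b|$ and $a=\sqrt{\operatorname{Re}\mu_{j_0}+t^2}\ge\sqrt{\kappa_0}\,\omega$, the numerator above is at least $4\sqrt{\kappa_0}\,\omega^2$; and if $t\le1$ then $a\le\sqrt{(1+\rho)\omega^2+1}$, so $(a-\omega)^2+t^2\le C(\rho,d)\,\omega^2$ once $\omega$ is large, whence $e^{2t}-1\ge 4\sqrt{\kappa_0}/C(\rho,d)=:L>0$ and therefore $t\ge\tfrac12\ln(1+L)$. In either case $t\ge c_\ast:=\min\{1,\tfrac12\ln(1+L)\}>0$, and consequently, for $\omega$ large,
\[
|\operatorname{Im}\lambda|\ \ge\ |\operatorname{Im}\mu_{j_0}|\ =\ 2\,a\,t\ \ge\ 2\sqrt{\kappa_0}\,c_\ast\,\omega\ =:\ s\,\omega .
\]

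I expect two steps to carry the weight. The first is the justification of separation of variables in this non–self–adjoint setting, i.e. that every eigenvalue of the cube problem is a sum of one–dimensional impedance eigenvalues; this is standard for product domains with coordinate–wise boundary conditions, but should be stated with some care (the one–dimensional operators have compact resolvent, so their eigenfunctions suffice to reduce the cube problem to the scalar ones). The second, and the genuinely technical, step is the lower bound on $|b|$ for the distinguished mode: the transcendental relation $e^{2|b|}-1=4a\omega/((a-\omega)^2+b^2)$ must be exploited together with the a priori bracket $\kappa_0\omega^2\le a^2-b^2\le(1+\rho)\omega^2$ — the upper bound, which uses $\operatorname{Re}\mu_j\ge0$ in an essential way, is what prevents $a$ (and hence $b$) from being anomalously large — and the constants must be tracked to confirm uniformity in $\omega$.
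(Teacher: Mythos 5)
Your proof is correct, and it takes a genuinely different route from the paper's. The paper argues by contradiction: it extracts a sequence $\omega_m\to\infty$ with $\operatorname{Im}(\lambda_m)/\omega_m\to 0$, passes to subsequences according to whether $\omega_m a_m/b_m$ or $\omega_m b_m/a_m$ tends to zero, and rules out both cases with Lemma~\ref{lem:not-sol} (a one-sided modulus bound on \eqref{eq:sep-frac-exp}); the second case is delicate, requiring the paper to locate a \emph{second} coordinate $l_m$ with $\operatorname{Re}(\xi_{l_m}^2)$ very negative and then disqualify it. Your extra ingredient is the integration-by-parts identity $\mu_j\|w_j\|^2=\|w_j'\|^2-\mathrm{i}\omega(|w_j(0)|^2+|w_j(1)|^2)$, which places every one-dimensional eigenvalue $\mu_j=\xi_j^2$ (not merely $\xi_j$) in the closed fourth quadrant. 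This sign information is stronger than anything the paper uses: it makes $|\operatorname{Im}\lambda|=\sum_j|\operatorname{Im}\mu_j|$, turns the localization of a dominant mode into a one-line pigeonhole on $\sum_j\operatorname{Re}\mu_j\in[(1-\rho)\omega^2,(1+\rho)\omega^2]$, and renders the paper's troublesome second case vacuous (no coordinate can have $\operatorname{Re}(\xi_l^2)<0$, and the upper bound $\operatorname{Re}\mu_{j_0}\le(1+\rho)\omega^2$ caps $a$ at $O(\omega)$ directly). Your exact modulus identity $e^{2|b|}-1=4a\omega/((a-\omega)^2+b^2)$ is a sharpened form of Lemma~\ref{lem:not-sol}, and the resulting argument is direct rather than by compactness, yielding an explicit constant $s=2\sqrt{(1-\rho)/d}\,\min\{1,\tfrac12\ln(1+L)\}$. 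The one shared weak point is the completeness of separated eigenfunctions for this non-self-adjoint product problem, which the paper also assumes without proof; you at least flag it. In short: the paper's proof needs less machinery per step but more case analysis; yours trades a single weak-formulation identity for a shorter, quantitative, and arguably more robust argument.
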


\begin{figure}[ptbh]
\centering
\includegraphics[width=0.8\textwidth, trim={0 1in 0 0}, clip]{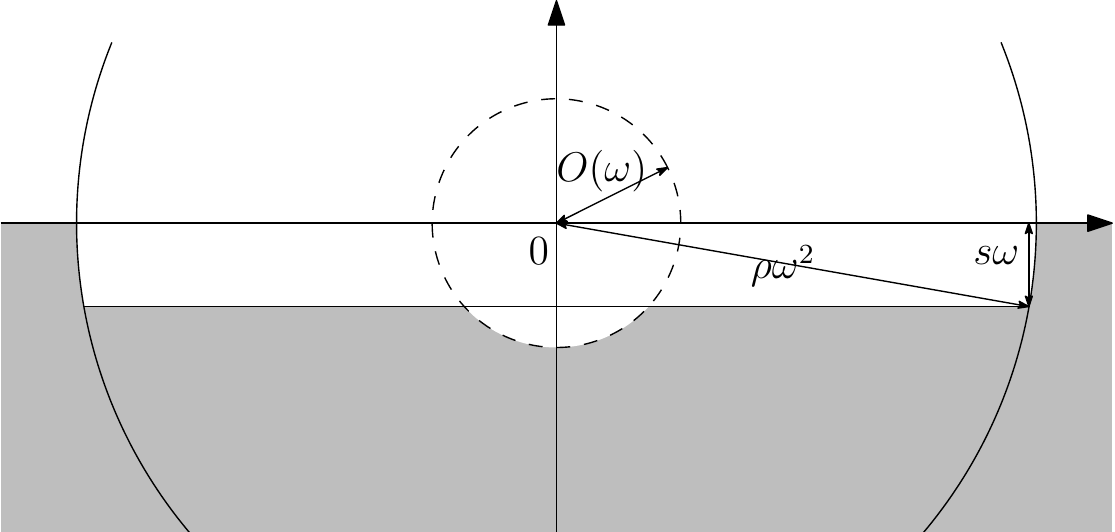}
\caption{Distribution of eigenvalues in the complex plane 
for the Helmholtz equation with impedance boundary condition.
All the eigenvalues are in the shaded area of the lower half plane.
The dashed circle around the origin with radius $O(\omega)$
is the result of \cite{melenk, cummings, gander-smallshift}.
In the circle with radius $\rho\omega^2$, 
the minimum $s\omega$ distance from the real axis 
is the result of Theorem \ref{thm:dist-eig-ibc}.}
\label{fig:eig-est}
\end{figure}

The theorem gives a more detailed characterization of small eigenvalues.
Figure \ref{fig:eig-est} illustrates how this compares with existing stability results.
The proof is based on a separation of variables in following form:
\begin{equation}
\label{eq:sep-ode}
v(x)=\prod_{j=1}^d \varphi_j(x_j), \quad \varphi_{j}^{\prime\prime}(x_{j})
+\xi_{j}^{2} \varphi_{j}(x_{j}) =0,
\end{equation}
where $\xi_j\in\mathbb{C}$ and $\operatorname{Re}(\xi_j)\geq 0$.
An eigenvalue can be written as $\lambda = \xi\cdot\xi - \omega^2$, where
$\xi = \begin{pmatrix}\xi_1 & \xi_2 & \cdots &\xi_d\end{pmatrix}$.
The general solution of \eqref{eq:sep-ode} is
$
\varphi_{j}(x_{j}) = a_{j}^{+}e^{\mathrm{i}\xi_{j} x_{j}} + a_{j}^{-}e^{-\mathrm{i}\xi_{j}x_{j}}.
$
The boundary condition in \eqref{eq:eig-ibc} suggests
\begin{equation}
\label{eq:sep-bc}
-\varphi^{\prime}_{j}(0) = \mathrm{i}\omega\varphi_{j}(0),\quad
 \varphi^{\prime}_{j}(1) = \mathrm{i}\omega\varphi_{j}(1).
\end{equation}
Substituting the general solution into \eqref{eq:sep-bc}, we have
\begin{align}
a_{j}^{+}(\omega+\xi_{j})\phantom{e^{\mathrm{i}\xi_j}}  &  = -a_{j}^{-}(\omega-\xi
_{j}),\\
a_{j}^{+}(\omega-\xi_{j})e^{\mathrm{i}\xi_{j}}  &  = -a_{j}^{-}(\omega+\xi
_{j})e^{-\mathrm{i}\xi_{j}}.
\end{align}
Eliminating $a_{j}^{\pm}$, we have that every $\xi_{j}$ solves the same equation of $z$
\begin{equation}
\label{eq:sep-frac-exp}\frac{\omega-z}{\omega+z}=\pm e^{-\mathrm{i}z}.
\end{equation}
There is no root in the first quadrant $\{z:\operatorname{Re}(z)\geq0,
\operatorname{Im}(z) > 0\}$ because
\[
\frac{|\omega-z|}{|\omega+z|}\leq1,\quad|e^{-\mathrm{i}z}|>1.
\]
So we are only interested in the fourth quadrant
$\{z: \operatorname{Re}(z)\geq0, \operatorname{Im}(z)\leq0\}$.
The following lemma will be used in the proof of Theorem \ref{thm:dist-eig-ibc}.

\begin{lemma}
Given a sequence of angular frequencies $\{\omega_m\}$ that goes to infinity,
and for a sequence of complex numbers in the fourth quadrant
$
\{z_m=a_m-\mathrm{i}b_m:a_m,b_m\geq0\},
$
if there exists $c_{1}, c_{2}>0$ such that $b_{m}\geq c_{1} \omega_{m},
b_{m}\geq c_{2} a_{m}$,
then $z_{m}$ does not solve (\ref{eq:sep-frac-exp}) for sufficiently large $m$.
\label{lem:not-sol}
\end{lemma}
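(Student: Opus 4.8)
The plan is to take absolute values on both sides of \eqref{eq:sep-frac-exp} and show that the two moduli cannot agree once $m$ is large. Write $z_m=a_m-\mathrm{i}b_m$ with $a_m,b_m\geq 0$. Since $-\mathrm{i}z_m=-\mathrm{i}a_m-b_m$, we have $|{\pm}e^{-\mathrm{i}z_m}|=e^{-b_m}$, the sign being irrelevant, while
\[
\left|\frac{\omega_m-z_m}{\omega_m+z_m}\right|^2
=\frac{(\omega_m-a_m)^2+b_m^2}{(\omega_m+a_m)^2+b_m^2}.
\]
Hence, if $z_m$ were a root of \eqref{eq:sep-frac-exp}, then necessarily
\[
\frac{(\omega_m-a_m)^2+b_m^2}{(\omega_m+a_m)^2+b_m^2}=e^{-2b_m}.
\]

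First I would bound the right-hand side from above: because $b_m\geq c_1\omega_m$ and $\omega_m\to\infty$, we get $e^{-2b_m}\leq e^{-2c_1\omega_m}\to 0$. Next I would bound the left-hand side from below by a positive constant that does not depend on $m$. Dropping the nonnegative term $(\omega_m-a_m)^2$ in the numerator, and using the two hypotheses $a_m\leq b_m/c_2$ and $\omega_m\leq b_m/c_1$ to estimate the denominator, one obtains
\[
\frac{(\omega_m-a_m)^2+b_m^2}{(\omega_m+a_m)^2+b_m^2}
\geq\frac{b_m^2}{\bigl(b_m(1/c_1+1/c_2)\bigr)^2+b_m^2}
=\frac{1}{(1/c_1+1/c_2)^2+1}=:\kappa>0.
\]
Since $\kappa$ depends only on $c_1,c_2$ while $e^{-2b_m}\to 0$, there is an $M$ such that $e^{-2b_m}<\kappa$ for all $m\geq M$; for those $m$ the displayed equality is impossible, so $z_m$ does not solve \eqref{eq:sep-frac-exp}.

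The computation is elementary and there is essentially no hard step. The only point requiring a little care is keeping the upper bound on $e^{-2b_m}$ and the lower bound on the modulus of the Möbius factor \emph{uniform} in $m$, i.e. making sure the constant $\kappa$ truly involves only $c_1$ and $c_2$; the bookkeeping above does this. If anything, the conceptual ``obstacle'' is simply recognizing that only the modulus of \eqref{eq:sep-frac-exp} is needed, the phase and the $\pm$ sign playing no role.
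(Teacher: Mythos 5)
Your proof is correct and follows essentially the same route as the paper: take moduli on both sides of \eqref{eq:sep-frac-exp}, bound the M\"obius factor below by a constant depending only on $c_1,c_2$ (your $\kappa$ is just the square of the paper's constant $1/\sqrt{(1/c_1+1/c_2)^2+1}$ since you work with squared moduli), and note that $|e^{-\mathrm{i}z_m}|=e^{-b_m}\to 0$ because $b_m\geq c_1\omega_m\to\infty$. No gaps.
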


\begin{proof}
The lemma can be easily proved by taking the absolute value on both sides of (\ref{eq:sep-frac-exp}).
For the left-hand side of \eqref{eq:sep-frac-exp}, we have
\[
\left|  \frac{\omega_m-a_m + \mathrm{i}b_m}{\omega_m+a_m - \mathrm{i}b_m}\right|
\geq\frac{b_m}{|\omega_m+a_m - \mathrm{i}b_m|}\geq
\frac{1}{\sqrt{(1/c_{1}+1/c_{2})^{2}+1}}.
\]
The right-hand side of \eqref{eq:sep-frac-exp} satisfies
\[
\lim_{m\to\infty}|\exp(-b_m-\mathrm{i}a_m)|= \lim_{m\to\infty}\exp(-b_m)=0.
\]
So the equality does not hold for sufficiently large $m$.
\hfill
\end{proof}

{\it Proof of Theorem \ref{thm:dist-eig-ibc}.}
If the statement is false, then there exists a $\rho>0$, 
a sequence of angular frequencies $\{\omega_{m}\}$ that goes to infinity,
and a sequence of complex phase vectors
$\{\xi^{(m)}\in\mathbb{C}^d\}$ satisfying \eqref{eq:sep-ode} and \eqref{eq:sep-bc},
but for $\lambda_m = \xi^{(m)}\cdot\xi^{(m)}-\omega_{m}^{2}$,
we have that
\begin{equation}
|\lambda_m| \leq \rho\omega_m^2,\quad
\lim_{m\to\infty} \frac{\operatorname{Im}(\lambda_{m})}{\omega_m} = 0.
\label{eq:lam-norm-imag}
\end{equation}

We can assume that there exists a sequence of indices $\{j_m\in\{1,2,\cdots,d\}\}$
such that 
\begin{equation}
|\xi_{j_m}^{(m)}|\geq\omega_m\sqrt{\frac{1-\rho}d},
\label{eq:xi-norm}
\end{equation}
because otherwise $|\xi^{(m)}|^2 < \omega_m^2(1-\rho)$, and we have 
\[
|\lambda_m|\geq \omega_{m}^{2}-|\xi^{(m)}|^2 >\rho \omega_m^{2},
\]
which contradicts with \eqref{eq:lam-norm-imag}.

Let $\xi_{j_m}^{(m)}=a_m-\mathrm{i}b_m$.
Because of \eqref{eq:sep-frac-exp}--\eqref{eq:xi-norm},
$a_m,b_m\geq0$ satisfy
\begin{align}
\label{eq:sep-seq-norm}
a_m^2+b_m^2 &\geq\frac{1-\rho}{d}\omega_m^2,\\
\label{eq:sep-seq-lim}
\lim_{m\to\infty} \frac{a_mb_m}{\omega_m}&=0,\\
\label{eq:sep-seq-root}
\frac{\omega_m-a_m + \mathrm{i}b_m}{\omega_m+a_m - \mathrm{i}b_m}&=\pm\exp(-b_m-\mathrm{i}a_m).
\end{align}
From \eqref{eq:sep-seq-norm} and \eqref{eq:sep-seq-lim},
we get 
\[
\frac{\omega_ma_mb_m}{a_m^2+b_m^2} =
\frac{a_mb_m}{\omega_m} \frac{\omega_m^2}{a_m^2+b_m^2}\rightarrow0.
\]
Then we can find subsequences (still denoted by $\{a_m\},\{b_m\}$)
such that either $\omega_ma_m/b_m$ or $\omega_mb_m/a_m$ goes to zero.

If $\omega_ma_m/b_m \to0$, then $a_m/b_m\to 0$ for large $\omega_m$. 
From \eqref{eq:sep-seq-norm}, we also have 
\[
\frac{\omega_m}{b_m}\leq \sqrt{\frac{d}{1-\rho}\left(1+\frac{a_m^2}{b_m^2}\right)}.
\]
$\{a_m-\mathrm{i}b_m\}$ satisfies the assumptions in Lemma
\ref{lem:not-sol}. Hence they are not roots of \eqref{eq:sep-frac-exp}
for large $m$, contradiction.

If $\omega_{m}b_{m}/a_{m} \to0$, then from \eqref{eq:sep-seq-lim}
\[
b_{m}^2 = \frac{\omega_{m}b_{m}}{a_m} \frac{a_mb_m}{\omega_m}
\to0.
\]
From \eqref{eq:sep-seq-norm}, we have
\[
\frac{a_m^2}{\omega_m^2}\geq \frac{1-\rho}{d} -\frac{b_m^2}{\omega_m^2}.
\]
So $a_{m}/\omega_{m}$ is bounded above zero.
From \eqref{eq:sep-seq-root},
\[
1 = \exp(-\lim_{m\to\infty} b_m) = \lim_{m\to\infty}\frac{|1-a_m/\omega_m|}{1+a_m/\omega_m}.
\]
So $a_m/\omega_m\to\infty$. For the complex phase vectors $\{\xi^{(m)}\}$,
\[
\lambda_m = (a_m^{2}-b_m^{2}-\omega_m^{2}) - \mathrm{i}2a_mb_m
+\sum_{l\neq j_{m}}\xi_{l}^{(m)}\xi_{l}^{(m)}.
\]
For large $m$, we have $a_{m}^{2}-b_{m}^{2}-\omega_{m}^{2} > 2\rho\omega_{m}^{2}$.
We can find another sequence 
$\{\xi_{l_{m}}^{(m)}: l_{m}\neq j_{m}\}$
such that 
\[
\operatorname{Re}(\xi_{l_m}^{(m)}\xi_{l_m}^{(m)}) < - \frac{\rho}{d-1} \omega_{m}^2.
\]
Because otherwise
\[
\operatorname{Re}(\lambda_{m})>
2\rho\omega_{m}^{2} + \sum_{l\neq j_{m}}\operatorname{Re}(\xi_{l}^{(m)}\xi_{l}^{(m)})
\geq 2\rho\omega_{m}^2 - (d-1)\frac{\rho}{d-1} \omega_{m}^2 =
\rho \omega_{m}^2,
\]
which contradicts with \eqref{eq:lam-norm-imag}.
Let $\xi_{l_{m}}^{(m)}=\tilde{a}_{m}-\mathrm{i}\tilde{b}_{m}$ 
with non-negative $\tilde{a}_{m}$ and $\tilde{b}_{m}$.
We have
\[
\tilde{a}_{m}^2 - \tilde{b}_{m}^2 < - \frac{\rho}{d-1} \omega_{m}^2.
\]
So $\tilde{b}_{m}>\tilde{a}_{m}$ and $\tilde{b}_{m} > \sqrt{\rho/(d-1)}\omega_{m}$.
Because of Lemma \ref{lem:not-sol}, this sequence does not solve
\eqref{eq:sep-frac-exp} for large $m$, which is a contradiction.
\hfill
\endproof

\subsection{GMRES convergence}
Theorem \ref{thm:dist-eig-ibc} depicts the fine structure of spectrum of discretized Helmholtz operators near the origin. 
For simplicity and clarity,
we assume for now that the contour integration \eqref{eq:rational} 
gives the exact solution of the outer problem,
and focus on the convergence of GMRES for solving the inner problem.

For the linear system $Au=f$, recall that $P$ is the eigenprojector defined in \eqref{eq:eigenprojector}.
After solving the outer problem, we obtain the solution $PA^{-1}f$ and the residual
\[
f-APA^{-1}f = (I-P)f.
\]
Since the solution of the inner problem is in the range of $I-P$, we have the following equation
\begin{equation}
A(I-P)u = (I-P)f.
\label{eq:inner}
\end{equation}

\begin{theorem}
Assume that $A(I-P)$ is normal,
and that there exists positive constants $\rho, s$ such that
$|\lambda|\leq \rho\omega^2$ and $\operatorname{Im}(\lambda)\leq - s\omega$ for all non-zero eigenvalues $\lambda$ of $A(I-P)$,
then for solving \eqref{eq:inner}, the $k$th GMRES residual $r_k$ satisfies
\[
\|r_k\|_2\leq \mu^k\|r_0\|_2,\quad \mu^2 \leq 1-\frac{s^2}{\rho^2\omega^2}.
\]
\end{theorem}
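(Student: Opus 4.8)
The plan is to reduce the GMRES error bound to a scalar polynomial approximation problem over the spectrum of $A(I-P)$, using normality, and then to exhibit an explicit degree-one polynomial raised to the $k$th power.

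First I would set up the reduction. Since the spectral projector $P$ commutes with $A$, the matrix $M := A(I-P) = (I-P)A$ leaves $\operatorname{Range}(I-P)$ invariant and, because $A$ is invertible, $\operatorname{Null}(M) = \operatorname{Range}(P)$ and $\operatorname{Range}(M) = \operatorname{Range}(I-P)$. Normality of $M$ forces $\operatorname{Null}(M)$ and $\operatorname{Range}(M)$ to be orthogonal complements, so the initial residual $r_0 = (I-P)(f - Au_0)$ lies in $\operatorname{Range}(I-P)$ and hence has no component in $\operatorname{Null}(M)$. Diagonalizing $M$ by a unitary matrix and discarding the vanishing zero-eigenvalue components of $r_0$, the standard GMRES optimality estimate yields
\[
\frac{\|r_k\|_2}{\|r_0\|_2} \le \min_{\substack{p\in\mathcal{P}_k\\ p(0)=1}}\ \max_{\lambda}\ |p(\lambda)|,
\]
where the maximum runs over the nonzero eigenvalues $\lambda$ of $M$, all of which satisfy $|\lambda|\le\rho\omega^2$ and $\operatorname{Im}(\lambda)\le -s\omega$ by hypothesis.

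Next I would take the feasible choice $p(\lambda) = (1-\lambda/c)^k$, reducing the task to bounding $\max_\lambda |1-\lambda/c|$ and raising to the $k$th power. Writing $c = -\mathrm{i}a$ with $a>0$ and $\lambda = x+\mathrm{i}y$, a direct expansion gives
\[
\left|1-\frac{\lambda}{c}\right|^2 = 1 + \frac{2y}{a} + \frac{|\lambda|^2}{a^2}
\le 1 - \frac{2s\omega}{a} + \frac{\rho^2\omega^4}{a^2},
\]
using $y\le -s\omega<0$ and $|\lambda|\le\rho\omega^2$. Minimizing the right-hand side in $a$ gives $a = \rho^2\omega^3/s$, for which the bound collapses to $1 - s^2/(\rho^2\omega^2)$. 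Hence $\max_\lambda|1-\lambda/c| \le \mu$ with $\mu^2 = 1 - s^2/(\rho^2\omega^2)$, and therefore $\|r_k\|_2 \le \mu^k\|r_0\|_2$, as claimed.

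The one point requiring care is the reduction in the second paragraph: the system $Mu = (I-P)f$ is singular (though consistent), so one must justify that GMRES convergence is governed by a minimax problem over the \emph{nonzero} part of the spectrum only. This is precisely where normality enters — it makes $\operatorname{Null}(M)$ and $\operatorname{Range}(M)$ orthogonal, so $r_0$ lies in the span of eigenvectors with nonzero eigenvalues and the zero eigenvalue never contributes. The remaining minimax step is elementary and is essentially an Elman-type estimate: one covers the spectral region $\{|\lambda|\le\rho\omega^2\}\cap\{\operatorname{Im}(\lambda)\le -s\omega\}$ by a disk centered on the negative imaginary axis chosen to have the smallest possible ratio of radius to distance from the origin.
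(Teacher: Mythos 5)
Your proof is correct and reaches the stated bound, but the final step is carried out differently from the paper. The paper's proof uses normality to write $A(I-P)=V\Lambda V^H$ with $V$ having orthonormal columns and $\Lambda$ the diagonal of nonzero eigenvalues, observes $I-P=VV^H$, reduces \eqref{eq:inner} to the nonsingular system $\Lambda(V^Hu)=V^Hf$, and then simply cites the Elman estimate applied to $\mathrm{i}\Lambda$, whose Hermitian part has smallest eigenvalue at least $s\omega$ and whose norm is at most $\rho\omega^2$. You perform essentially the same reduction (your observation that normality makes $\operatorname{Null}(M)$ and $\operatorname{Range}(M)$ orthogonal is exactly what justifies the paper's identity $I-P=VV^H$, and you are in fact more careful than the paper about why the singular but consistent system causes no trouble), but instead of invoking Elman you prove the bound from scratch via the GMRES minimax characterization for normal matrices and the explicit polynomial $p(\lambda)=(1-\lambda/c)^k$ with $c=-\mathrm{i}\rho^2\omega^3/s$; your algebra for $\bigl|1-\lambda/c\bigr|^2$ and the optimization over $a$ check out and yield precisely $\mu^2=1-s^2/(\rho^2\omega^2)$. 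What your route buys is a self-contained, elementary argument that exposes the geometry (covering the spectral region by a disk centered on the negative imaginary axis with minimal ratio of radius to distance from the origin); what the paper's route buys is brevity and a statement that does not, in its final step, depend on normality, since the Elman estimate holds for any matrix with definite Hermitian part (though normality is still needed for the reduction to $\Lambda$). The two are mathematically equivalent here because for normal matrices the Elman bound coincides with the one obtained from the optimal degree-one residual polynomial.
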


\begin{proof}
{
Since $A(I-P)$ is normal, $A(I-P)$ can be unitarily diagonalizabe. That is, there exists a matrix $V$ with orthonormal columns 
such that
\[
A(I-P) = V\Lambda V^H,
\]
where $\Lambda$ is a diagonal matrix containing non-zero eigenvalues.
Then,
\[
I-P = V V^H,
\]
and \eqref{eq:inner} is equivalent with
\[
\Lambda (V^H u) = V^H f.
\]
It is easy to see that the smallest eigenvalue of the Hermitian part of $\mathrm{i}\Lambda$ is greater than or equal to $s\omega$ and $\|\mathrm{i} \Lambda \|_2\leq \rho \omega^2$. Then the final result follows directly from the Elman estimate 
\cite{elman} for the matrix $\Lambda$.
}
\hfill
\end{proof}

\section{Techniques for improved performance}\label{sec:tech}
In this section, we will discuss several special techniques to improve the speed and robustness of Algorithm \ref{alg:fci}.

\subsection{Quadrature on an ellipse}
The contour $\gamma$ is usually selected as a circle in existing methods.
By shrinking, say, the real axis,
the circle is transformed into an ellipse in the form of
\begin{equation}
\left\{
tr\cos\theta + \mathrm{i}r\sin\theta:\ t,r>0,\ \theta\in [0,2\pi]\right\}.
\label{eq:ellipse}
\end{equation}
For fixed $r$, 
the advantage of using a small ratio $t$ is that 
there are fewer eigenvalues enclosed by the contour.
Hence the contour integration \eqref{eq:rational} is closer to the true inverse.
The disadvantage is that the shape of $\gamma$ becomes irregular,
so the number of quadrature points may need to increase.

One way to derive quadrature rules on an ellipse is 
by mapping it to the unit circle parametrized by the angle $\theta$.
Note that
\[
\frac{1}{2\pi\mathrm{i}}\mathrm{d}z = 
\frac{1}{2\pi\mathrm{i}} \mathrm{d}( tr \cos\theta + \mathrm{i}r\sin\theta)
=(r\cos\theta+\mathrm{i} tr\sin\theta) \frac{\mathrm{d}\theta}{2\pi}.
\]
For an equispaced set of angles $\{\theta_j\}$,
the quadrature weights can be chosen as
\[
\sigma_j = (r\cos\theta_j+\mathrm{i} tr\sin\theta_j)/J,
\]
where $J$ is the number of quadrature points.

\subsection{Shifting the center}
In order to avoid an ill-conditioned shifted system,
we shift the center of the ellipse according to the spectrum
and choose a small number of points such as $6$ so that
the quadrature points are not close to any eigenvalue.
This makes the contour integration method robust.

{
Each quadrature point $z_j$ satisfies
\begin{equation}
z_j = tr(\cos(2j-1)\phi - \cos\phi)
+ \mathrm{i}(r\sin(2j-1)\phi - \rho_2/2),\quad j\in[1,\dots,J],
\label{eq:quad-points}
\end{equation}
where $\phi = \pi/J$ and $\rho_2$ is the spectral radius of the skew-Hermitian part of $A$.
The center of the contour is $-tr\cos\phi - \mathrm{i}\rho_2/2$.
For the real part,
\[
\operatorname{Re}(z_j) = tr(\cos(2j-1)\phi - \cos\phi) \leq 0,
\]
so that the shifted matrices cannot be more indefinite than the original one.
We choose 
\begin{equation}
r=(\rho_2/2 + \epsilon)/\sin\phi,
\label{eq:quad-radius}
\end{equation}
so that the imaginary part is
\[
\operatorname{Im}(z_j)\not\in (-\epsilon-\rho_2,\epsilon).
\]
According to Lemma \ref{lem:spectrum-1x1},
this choice ensures that each $A-z_jI$ is invertible.
The quadrature points are visualized in Figure \ref{fig:contour}.
}

\begin{figure}[ptbh]
\centering
\includegraphics[width=\textwidth]{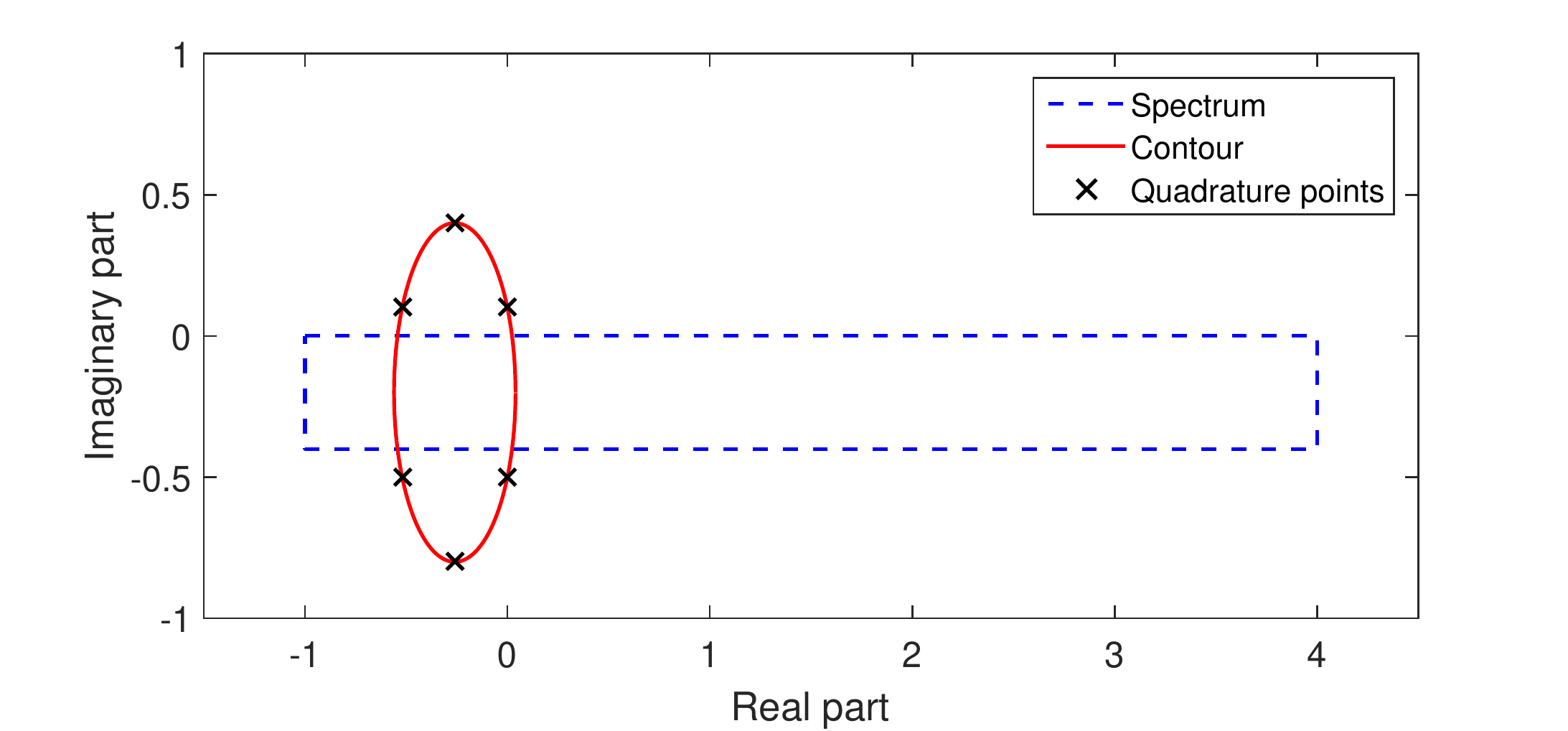}
\caption{Illustration of the quadrature points \eqref{eq:quad-points}.
Here, $t=0.5$, $r = 0.6$, and $J = 6$.}
\label{fig:contour}
\end{figure}

\subsection{Preconditioning the inner problem}
{The inner problem discussed in Section \ref{sec:inner} 
is an idealized case where the outer problem is exactly solved.
Error in the solution and quadrature may affect the convergence of the inner problem.
One can apply some standard preconditioners to improve the convergence.
We find that an (approximate) discrete Laplacian
usually gives a satisfactory performance by deflating
the large eigenvalues of the Helmholtz problem.
For the sparse case, this can be achieved by computing
an ILU factorization without fill.
}

\section{Numerical examples}
To illustrate the performance of the proposed method,
we present the test results for solving
a challenging 3D high-frequency Helmholtz equation.
Since the method has little restrictions on the type of discretization:
we attempt to solve both dense linear systems from a Fourier spectral method
as well as sparse ones based on finite difference methods.
The solution algorithms are implemented by MATLAB.
The test machine is a Linux workstation having 3.5GHz CPU and 64GB RAM.
{In this section, we use the following notation:
\begin{itemize}
\item \textsf{its}: number of iterations;
\item \textsf{mvs}: number of matrix-vector products;
\item \textsf{i-t}: iteration time in seconds.
\end{itemize}
}
\subsection{Test problem}
The test matrix can be written formally as
\begin{equation}
A= S-M + \mathrm{i}D,
\label{eq:stiff-mass}
\end{equation}
for some Hermitian positive semi-definite matrices $S$, $M$, and $D$.
$S$ is the negative discrete Laplacian,
$M$ is the mass matrix that generates the indefiniteness of $A$,
and $D$ gives the non-Hermitian part.
For solving free-space problems,
$D$ is used to reduce artificial reflections near the boundaries of the computational domain.
One example is a diagonal matrix which has positive diagonal entries for points near the boundary 
and zero elsewhere, see for example \cite{shin-sponge}.

If $S, M, D \succeq 0$, and $\rho(M)\leq 1$, then $S-M+I\succeq 0$
and $A$ satisfies the assumptions of Lemma \ref{lem:spectrum-1x1}.
It is helpful to know $\rho_1 = \rho(S-M+I)$ and $\rho_2 = \rho(D)$,
because they characterize the spectrum of $A$.
Since $\rho_2$ only depends on $D$,
we study the combined effect of $S$ and $M$ on $\rho_1$ for our test problem.

Fourier spectral method can be applied because the proposed method 
only requires matrix-vector products of the discrete Laplacian.
$S$ can be diagonalized by a 3D fast Fourier transform (FFT):
\[
S=F^{H}\Lambda F,
\]
where $F$ is the transformation matrix of a forward FFT,
and $\Lambda$ is the diagonal matrix 
consisting of the eigenvalues of $S$.
Each eigenvalue can be written as
\begin{equation}
\lambda_i = \left(\frac{l_{\min}}{N}\right)^2
\sum_{j=1}^3\min(i_j^2,(N-i_j)^2),
\label{eq:eig-Laplacian-spectral}
\end{equation}
where $i$ is a zero-based multi-index in an $N^3$ grid,
and $l_{\min}$ is the minimum sampling rate (minimum number of points per wavelength).
Clearly,
\[
\rho(S)= \max_i|\lambda_i|
\leq \left(\frac{l_{\min}}{N}\right)^2 \sum_{d=1}^3\left(\frac{N}{2}\right)^2 = \frac{3}{4}l_{\min}^2.
\]
The standard seven-point stencil can also be used to generate a sparse matrix $\tilde{S}$,
then the eigenvalues are replaced by
\[
\tilde{\lambda}_i = \left(\frac{l_{\min}}{2\pi}\right)^2\sum_{j=1}^3 2(1-\cos\frac{i_j}{N}\pi).
\]
The spectral radius is instead
$
\rho(\tilde{S})= \max_i|\tilde{\lambda}_i| \leq \frac{3}{\pi^2} l_{\min}^2.
$
Low-order finite difference methods may need a large sampling rate $l_{\min}$,
which increases the size and spectral radius of $A$.

The mass matrix $M$ contains the variations of the wavespeed.
For the simplest diagonal case, the $i$th non-zero diagonal entry is simply
\[
M_{ii} = \frac{l_{\min}^2}{l_i^2},
\]
where $l_i$ is the local sampling rate on the $i$th grid point. 
For this case,
$\rho_1$ defined in Lemma \ref{lem:spectrum-1x1} satisfies
\[
\rho_1=\rho(S-M+I) \leq \|S\|_2 + \|I-M\|_2 = O(l_{\min}^2) + \frac{l_{\max}^2-l_{\min}^2}{l_{\max}^2}.
\]
Regarding the spreading of the spectrum, 
the minimum sampling rate ($l_{\min}$) has a primary influence,
and the variations of the wavespeed ($l_{\max}/l_{\min}$) play a secondary role.

\subsection{Cost of shifted solution}
{
We first compare the two formulations discussed in Section \ref{sec:spectrum}.
We test the cost of solving a pair of shifted problems 
corresponding to the two cases \eqref{eq:lin-sys} and \eqref{eq:mat-2x2}.
In order to draw a fair comparison, we force the shifted problems to be equivalent.
Let $A$ be a Hermitian indefinite matrix,
and complex numbers $z$, $s$ satisfy $z+1 = (s+1)^2$. Then the pair of shifted problems are:
\begin{equation}
(A-zI)y = f,
\label{eq:shifted-1x1}
\end{equation}
and
\begin{equation}
\begin{pmatrix}
-(s+1)I & \mathrm{i}I\\
-\mathrm{i}(A + I) & -(s+1)I
\end{pmatrix}
\begin{pmatrix}
\mathrm{i}y \\ (s+1)y
\end{pmatrix}
=
\begin{pmatrix}
0 \\ f
\end{pmatrix}.
\label{eq:shifted-2x2}
\end{equation}
The results are tabulated in Table \ref{tab:shift-inv}. 
Note that the cost of each matrix-vector product is roughly the same.
\eqref{eq:shifted-1x1} is more suitable for
the case that the spectrum of $A$ is compact and the imaginary shift is large,
otherwise \eqref{eq:shifted-2x2} is better suited.
}

\begin{table}[ptbh]
\centering
\caption{Number of matrix-vector products for reducing the residual by $10^2$.
Boldface numbers indicate a superior performance over the alternative case.}
\label{tab:shift-inv}
\tabcolsep=1.6mm\renewcommand{\arraystretch}{1.1}
\begin{tabular}
[c]{|c||c|c|c|c|}
\multicolumn{5}{c}{(a) Case one: solving \eqref{eq:shifted-1x1}}\\
\hline
Spectrum & $z=\mathrm{i}$ & $z=\mathrm{i}/2$ & $z=\mathrm{i}/4$ & $z=\mathrm{i}/8$\\
\hline
$[-1,8]$  & {\bf 28} & {\bf 67} & {\bf 156} & {\bf 276}\\
\hline
$[-1,16]$ & {\bf 52} & {\bf 145} & 381 & 721\\
\hline
$[-1,32]$ & {\bf 97} & 436 & 861 & 1691\\
\hline
$[-1,64]$ & 331 & 916 & 1831 & 3736\\
\hline
\end{tabular}
\begin{tabular}
[c]{|c||c|c|c|c|}
\multicolumn{5}{c}{ }\\
\multicolumn{5}{c}{(b) Case two: solving \eqref{eq:shifted-2x2}}\\
\hline
Spectrum & $z=\mathrm{i}$ & $z=\mathrm{i}/2$ & $z=\mathrm{i}/4$ & $z=\mathrm{i}/8$\\
\hline
$[-1,8]$ & 67 & 121 & 209 & 441\\
\hline
$[-1,16]$ & 85 &  157 &  {\bf 229} &  {\bf 609}\\
\hline
$[-1,32]$ & 113 &  {\bf 193} & {\bf 397} & {\bf 681}\\
\hline
$[-1,64]$ & {\bf 149} & {\bf 221} & {\bf 449} & {\bf 909}\\
\hline
\end{tabular}
\end{table}

\subsection{Scaling test}
This scaling test checks the cost of solving \eqref{eq:lin-sys} 
as the frequency and the problem size increase.
We choose a wavespeed function that has eight high-wavespeed anomalies.
Figure \ref{fig:mod320} visualizes the wavespeed function.
Figure \ref{fig:sol320} shows the solution of the largest problem size.

\begin{figure}[ptbh]
\centering
\includegraphics[width=\textwidth]{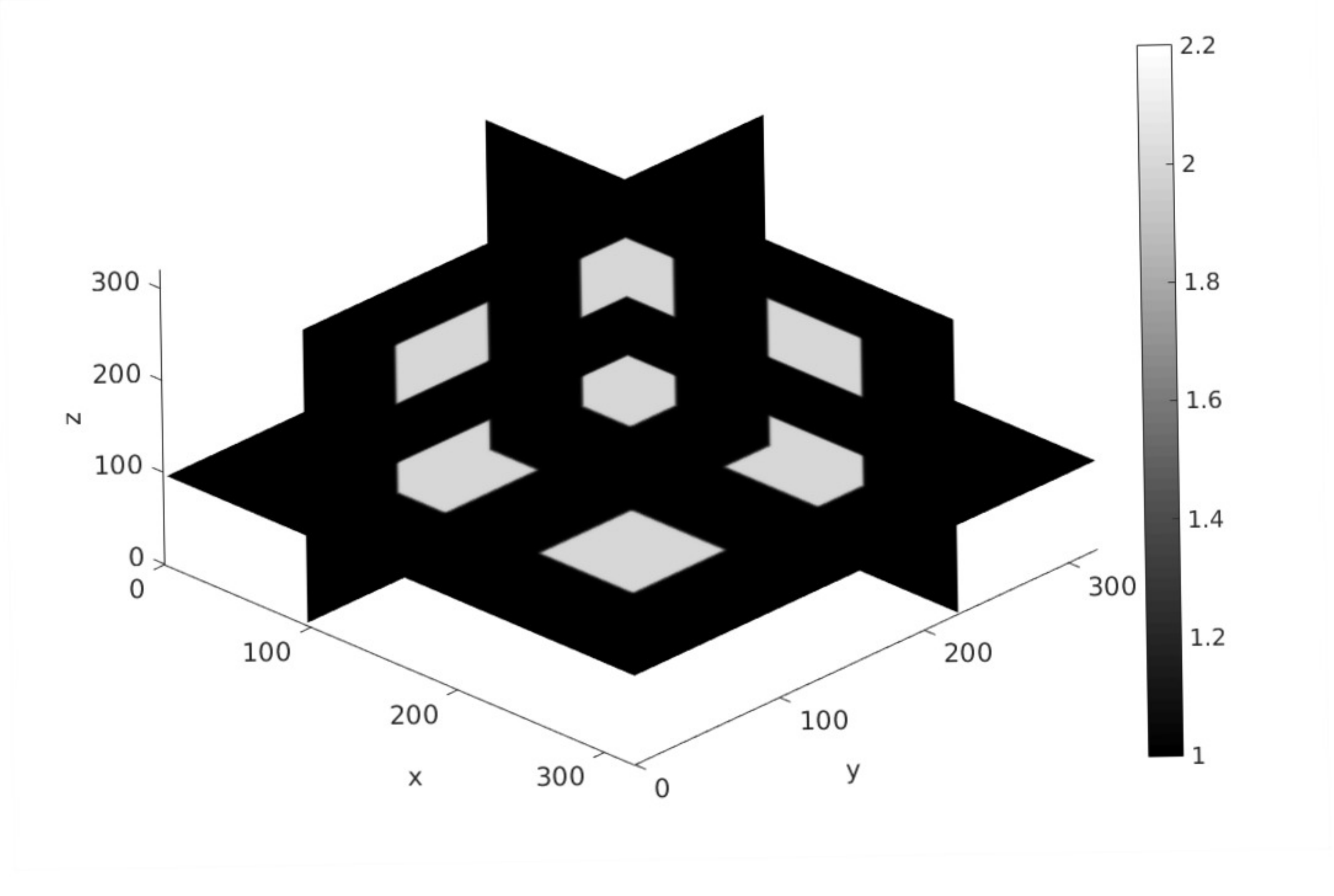}\\
\caption{Wavespeed function.}
\label{fig:mod320}
\end{figure}
\begin{figure}[ptbh]
\centering
\includegraphics[width=\textwidth]{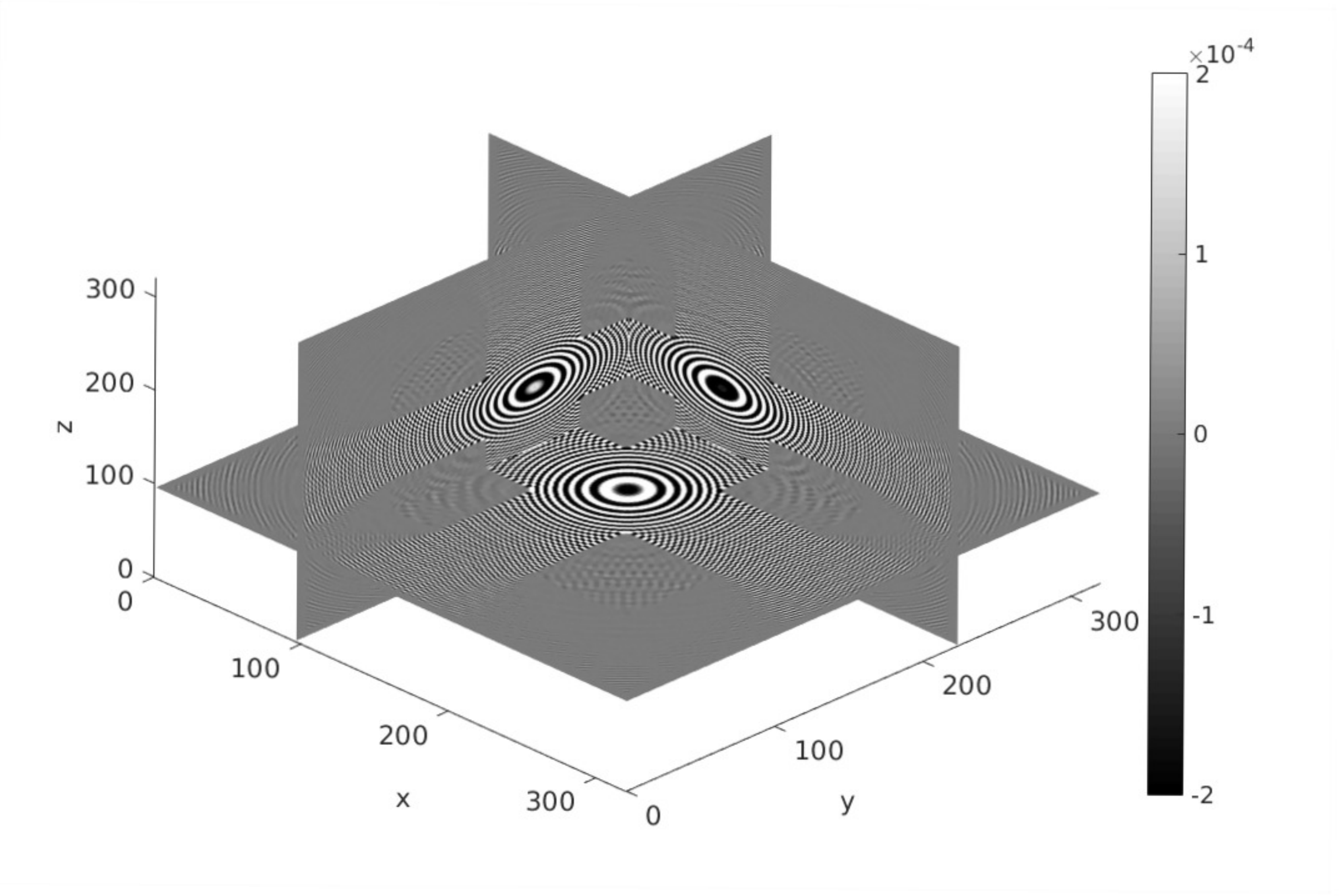}\\
\caption{Solution of the problem in Figure \ref{fig:mod320}.}
\label{fig:sol320}
\end{figure}

We set up the solver based on the techniques described in Section \ref{sec:tech}.
Six quadrature points are used, 
and their locations change with respect to the angular frequency $\omega$.
For example, for the four problems in Table \ref{tab:scaling}(a),
we fix $t=0.1$ in \eqref{eq:quad-points} 
and select $\epsilon\propto 1/\omega$ in \eqref{eq:quad-radius},
then the sets of quadrature points are
\[
\begin{pmatrix}
\phantom{-}0.00 + 0.80\mathrm{i}\\
-0.22 + 2.05\mathrm{i}\\
-0.43 + 0.80\mathrm{i}\\
-0.43 - 1.70\mathrm{i}\\
-0.22 - 2.96\mathrm{i}\\
\phantom{-}0.00 - 1.70\mathrm{i}\\
\end{pmatrix},\quad
\begin{pmatrix}
\phantom{-}0.00 + 0.40\mathrm{i}\\
-0.15 + 1.25\mathrm{i}\\
-0.30 + 0.40\mathrm{i}\\
-0.30 - 1.30\mathrm{i}\\
-0.15 - 2.16\mathrm{i}\\
\phantom{-}0.00-1.30\mathrm{i}\\
\end{pmatrix},\quad
\begin{pmatrix}
\phantom{-}0.00 + 0.27\mathrm{i}\\
-0.12 + 0.99\mathrm{i}\\
-0.25 + 0.27\mathrm{i}\\
-0.25 - 1.17\mathrm{i}\\
-0.12 - 1.89\mathrm{i}\\
\phantom{-}0.00 - 1.17\mathrm{i}\\
\end{pmatrix}
,\quad
\begin{pmatrix}
\phantom{-}0.00 + 0.20\mathrm{i}\\
-0.11 + 0.85\mathrm{i}\\
-0.23 + 0.20\mathrm{i}\\
-0.23 - 1.10\mathrm{i}\\
-0.11 - 1.76\mathrm{i}\\
\phantom{-}0.00 - 1.10\mathrm{i}\\
\end{pmatrix}.
\]
The first quadrature point in each set is closest to the origin. For these points, the parameters for applying \eqref{eq:fix-pt-exp}
are listed in Table \ref{tab:param}.

\begin{table}[ptbh]
\centering
\caption{Parameters of the polynomial iteration \eqref{eq:fix-pt-exp} for one pole.
The parameters are computed by solving \eqref{eq:fix-pt-conv}.
$z$ is the complex shift.
The method stops when the  residual is reduced by 5.
}
\label{tab:param}
\tabcolsep=1.6mm\renewcommand{\arraystretch}{1.1}
\begin{tabular}
[c]{|c|c|c|c|c|}
\hline
$n$     & $z$ & $q$ & $\delta$  & \textsf{mvs}\\
\hline
$80^3$  & 0.8$\mathrm{i}$ & 2 & 0.51 & 10\\
\hline
$160^3$ & 0.4$\mathrm{i}$ & 3 & 0.91 & 18\\
\hline
$240^3$ & 0.27$\mathrm{i}$ & 3 & 0.88 & 27\\
\hline
$320^3$ & 0.2$\mathrm{i}$ & 3 & 0.85 & 39\\
\hline
\end{tabular}
\end{table}

Regarding different cases in Section \ref{sec:spectrum},
by estimating the spectrum we find that for the spectral method one can apply 
Algorithm \ref{alg:fci} to the original matrix $A$ because the spectrum is more compact,
and the modified matrix $\mathrm{i}C - I$ in \eqref{eq:mat-2x2}
is suitable for the finite difference method.
The spectral method case includes a 
regularized inverse Laplacian preconditioner is diagonalized by FFT 
with eigenvalues $\{1/\max(\lambda_i,1)\}$,
where $\lambda_i$ is defined in \eqref{eq:eig-Laplacian-spectral};
the finite difference case includes a ILU(0) preconditioner
based on the 7-point stencil discrete Laplacian.

\begin{table}[ptbh]
\centering
\caption{Scaling test for fixed sampling rate and increasing problem sizes.}
\label{tab:scaling}
\tabcolsep=1.6mm\renewcommand{\arraystretch}{1.1}

\begin{tabular}
[c]{|c|c|c|c|c|}
\multicolumn{5}{c}{(a) Fourier spectral method}\\
\hline
$n$   & $\omega/(2\pi)$ & \textsf{its} & \textsf{mvs}  & \textsf{i-t}\\
\hline
$80^3$  & 35.56  & 6 & 879   &    39.8 \\
\hline
$160^3$ & 71.11  & 8 & 1795  &   719.6\\
\hline
$240^3$ & 106.67 & 9 & 2670  &  4610.2\\
\hline
$320^3$ & 142.22 & 11 & 3754 & 14841.6 \\
\hline
\multicolumn{5}{c}{ }\\
\multicolumn{5}{c}{(b) Finite difference method}\\
\hline
$n$   & $\omega/(2\pi)$ & \textsf{its} & \textsf{mvs}  & \textsf{i-t}\\
\hline
$80^3$  & 8.89  &  9 &  341 &  18.9\\
\hline
$160^3$ & 17.78 &  8 &  536 & 293.7\\
\hline
$240^3$ & 26.67 & 11 &  842 & 1649.2\\
\hline
$320^3$ & 35.56 & 10 &  1065 & 4954.5\\
\hline
\end{tabular}
\end{table}

\begin{figure}[ptbh]
\begin{center}%
\begin{tabular}
[c]{cc}%
\includegraphics[width=0.48\textwidth]{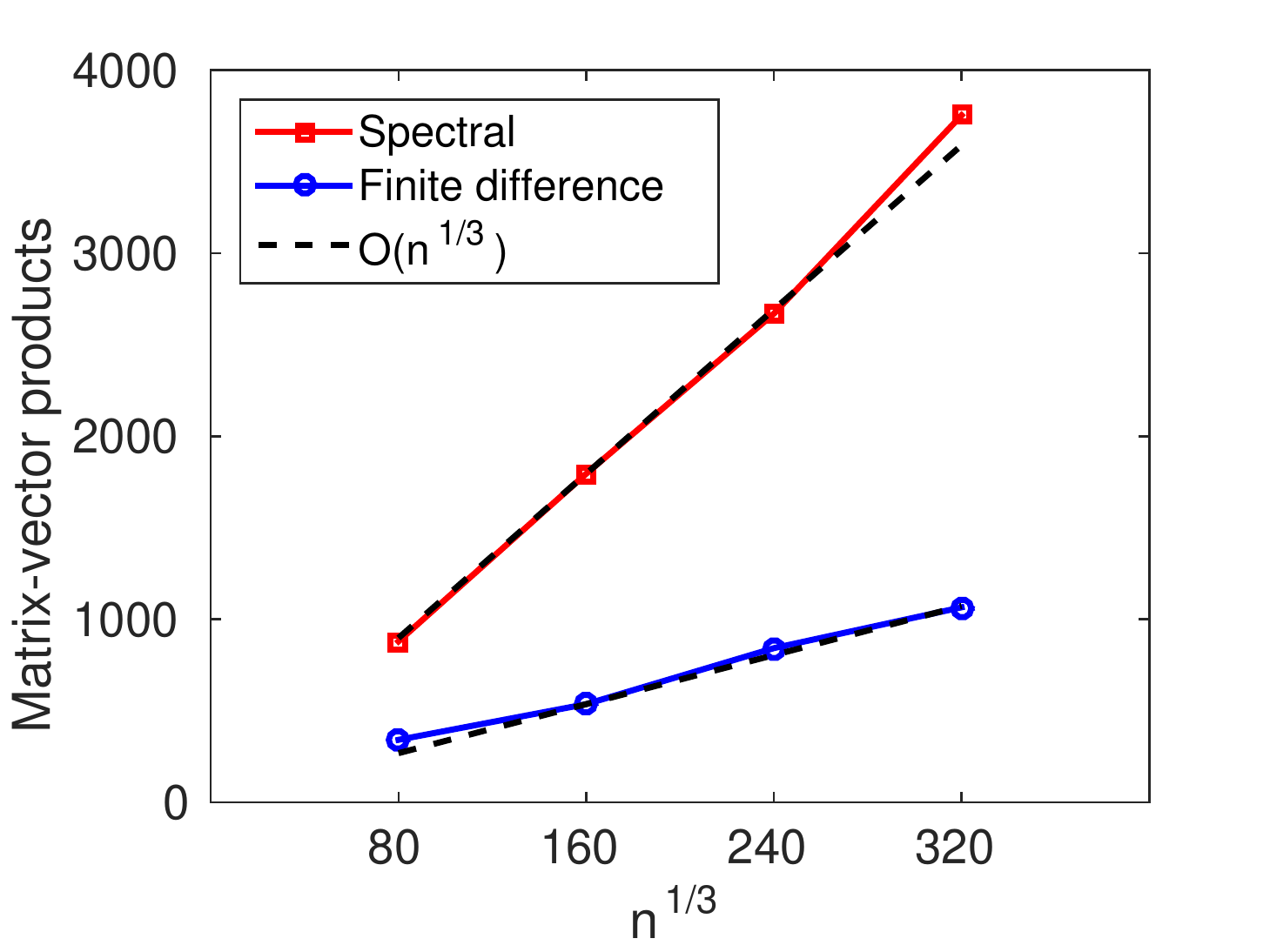} &
\includegraphics[width=0.48\textwidth]{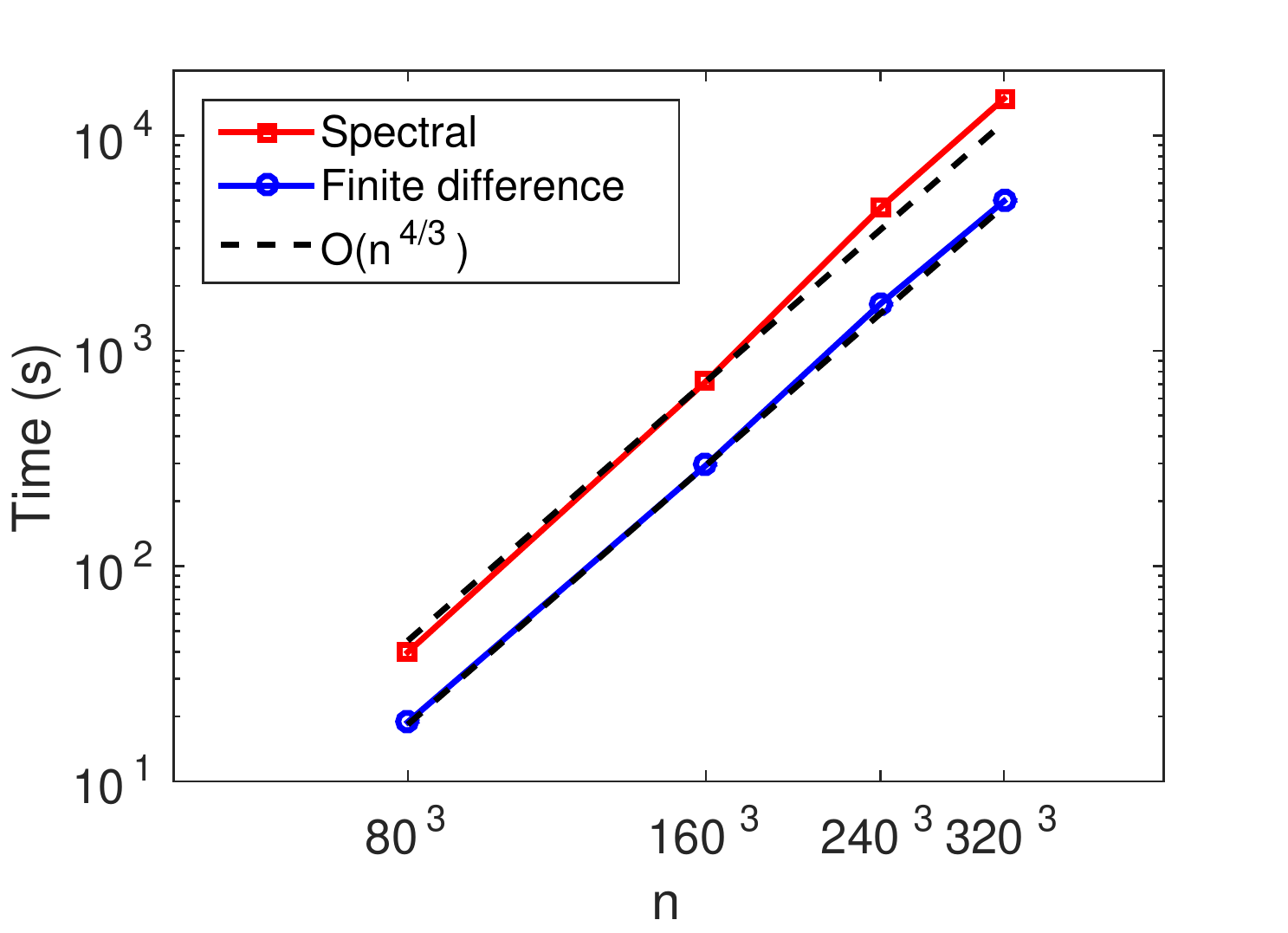}\\
{\small (a) \textit{Matrix-vector products}} & {\small (b)
\textit{Iteration time}}%
\end{tabular}
\end{center}
\caption{3D Scaling test.}%
\label{fig:scaling}%
\end{figure}

Table \ref{tab:scaling} and Figure \ref{fig:scaling} 
are the test results for reducing the residual by $10^{3}$.
For the spectral method, the sampling rate is $2.25$ in the background,
and is $4.5$ inside the anomalies.
For the finite difference approach, the frequency is reduced by four times
to obtain a minimum sampling rate of $9$.
For both cases, the number of matrix-vector products is proportional
to the angular frequency $O(\omega)$.

\section{SEG/EAGE salt-dome model}
The SEG/EAGE salt-dome model \cite{saltdome} is 
a 3D wavespeed model commonly used in exploration geophysics.
The physical size is 12km$\times$12km$\times$4.5km.
The wavespeed ranges between 1500m/s and 4500m/s,
see Figure \ref{fig:mod-salt} for sections of the wavespeed.
At high frequency -- 33.33Hz, we apply Fourier spectral discretization
on a 201$\times$676$\times$676 grid.
Figure \ref{fig:sol-salt} visualizes the solution wavefield.
For $10^{-2}$, $10^{-4}$, and $10^{-6}$ relative residual, the proposed method takes 
17 iterations (984min), 32 iterations (1850min), and 51 iterations (2943min), respectively.
Figure \ref{fig:res-salt} shows that linear convergence of the residual still holds
even when the wavespeed is rather complex.

\begin{figure}[ptbh]
\centering
\includegraphics[width=\textwidth]{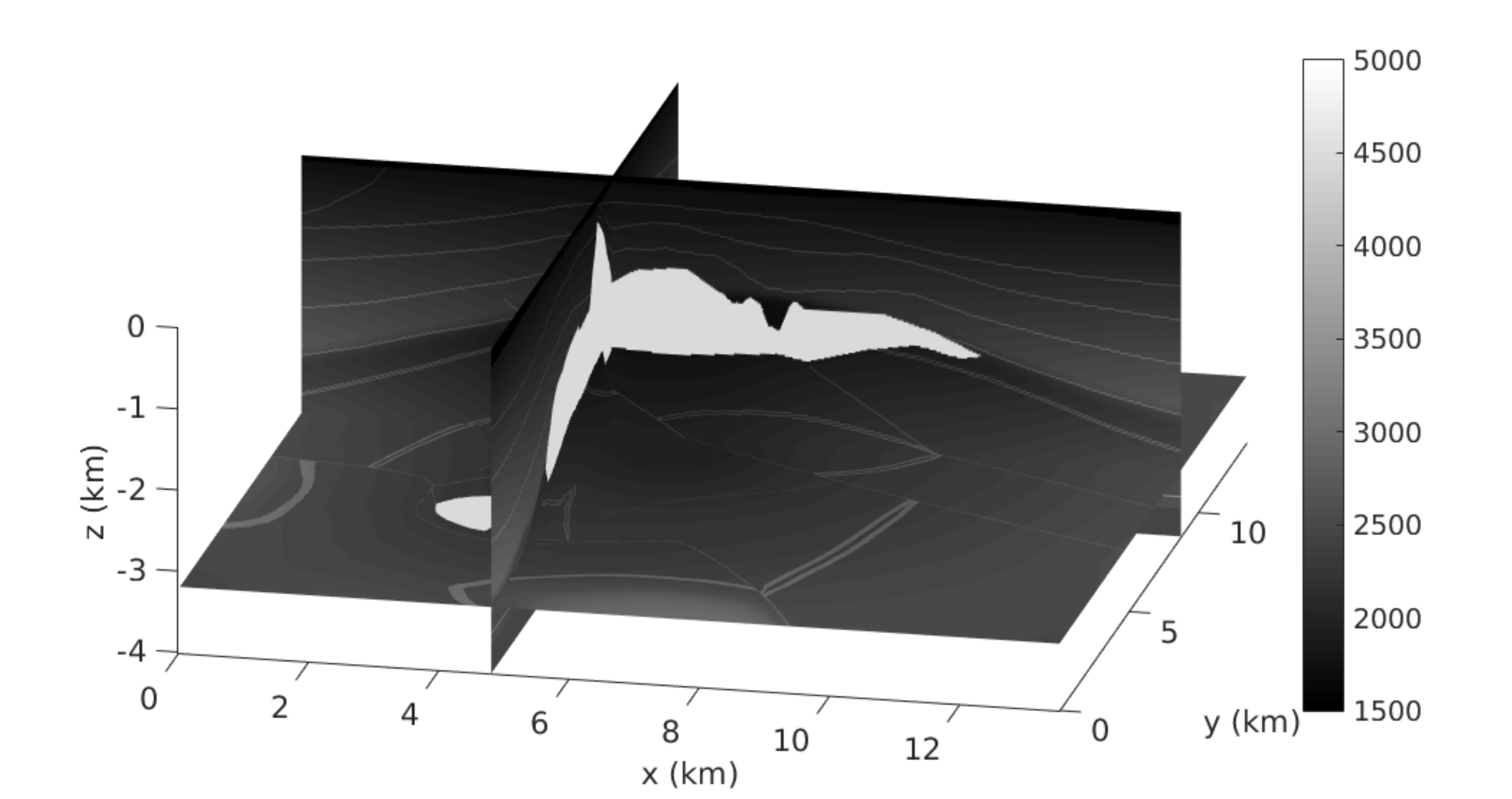}\\
\caption{Wavespeed function of the SEG/EAGE salt-dome model.}
\label{fig:mod-salt}
\end{figure}
\begin{figure}[ptbh]
\centering
\includegraphics[width=\textwidth]{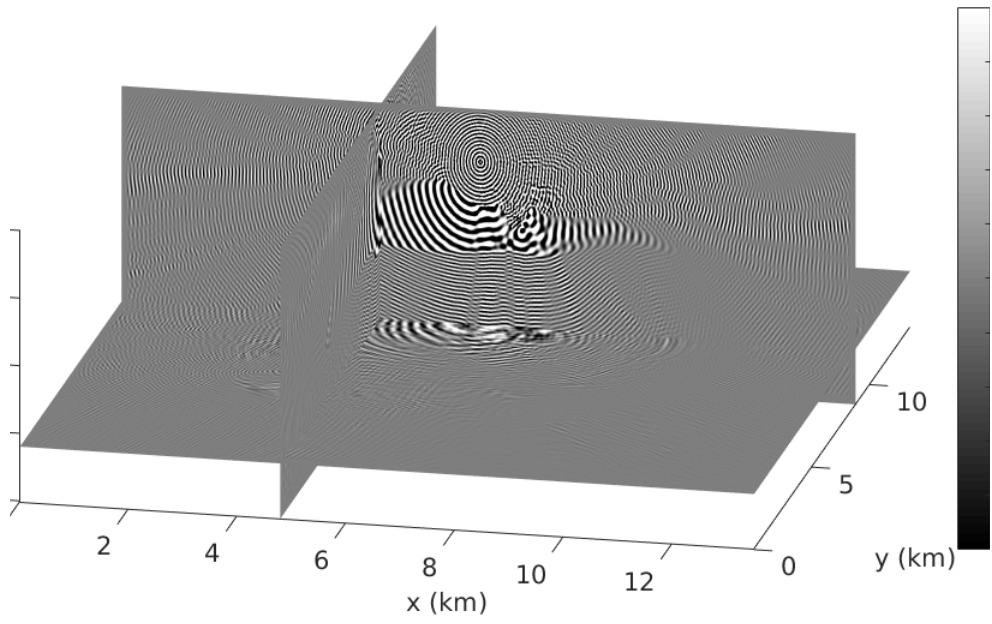}\\
\caption{33.33Hz solution wavefield corresponding to Figure \ref{fig:mod-salt}.}
\label{fig:sol-salt}
\end{figure}

\begin{figure}[ptbh]
\centering
\includegraphics[width=\textwidth]{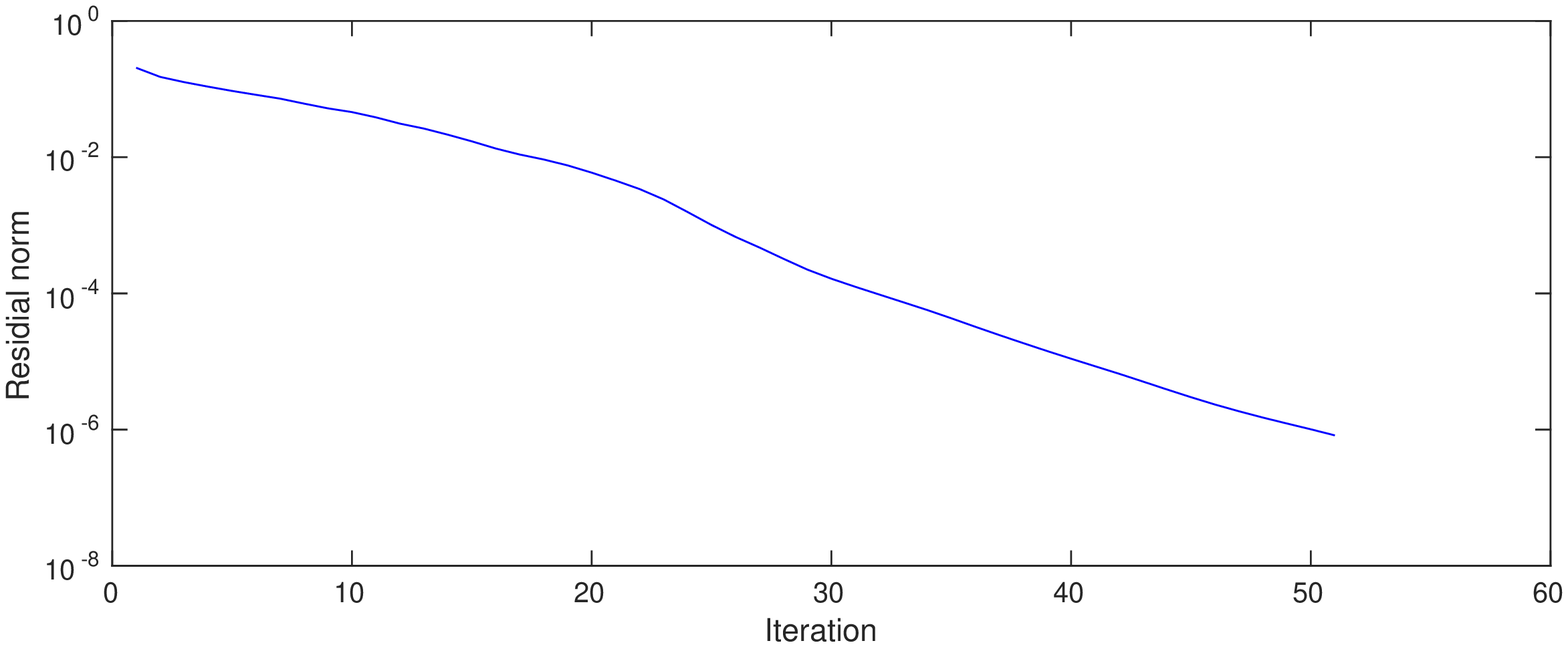}\\
\caption{Residual history of FCI iterations of SEG/EAGE salt-dome model.}
\label{fig:res-salt}
\end{figure}

\section{Conclusions}
An iterative method was proposed to solve the discretized 3D high-frequency Helmholtz equation.
In the framework of contour integration method
which implicitly decomposes the original problem into an inner and outer problem,
a fixed-point iteration was introduced to solve the outer problem.
GMRES is suitable for solving the inner problem because of
our theoretical estimates on the distribution of eigenvalues.
3D numerical examples show that the computational cost
of this method scales as $O(\omega^4)$ or $O(n^{4/3})$.
The method is especially suitable for solving high-frequency problems when combined with spectral methods.

\section*{Acknowledgements}
Maarten V. de Hoop gratefully acknowledges support from the Simons Foundation
under the MATH + X program, the NSF under grant DMS-1559587, and the corporate
members of the Geo-Mathematical Group at Rice University and Total. Yuanzhe Xi and Yousef Saad acknowledge support from NSF DMS-1521573 and the Minnesota Supercomputing Institute.


\begin{thebibliography}{99}                                                                                               
\bibitem {saltdome}\textsc{F. Aminzadeh, J. Brac, T. Kunz}
\emph{3-D salt and overthrust models}, SEG/EAGE 3-D Modeling Series No.1,
Society of Exploration Geophysicists, 1997.

\bibitem {bai-hss}\textsc{Z. Z. Bai, G. H. Golub, M. K. Ng}
\emph{Hermitian and skew-Hermitian splitting methods for
non-Hermitian positive definite linear systems},
SIAM J. Matrix Anal. Appl., 24 (2003), pp. 603--626.

\bibitem {bai-mhss}\textsc{Z. Z. Bai, M. Benzi, F. Chen}
\emph{Modified HSS iteration methods for a class of complex symmetric linear systems},
Computing, 87 (2010), pp. 93--111.

\bibitem {bayliss}\textsc{A. Bayliss, C. I. Goldstein, and E. Turkel},
\emph{An iterative method for the Helmholtz equation}, J. Comput. Phys., 49
(1983), pp. 443--457.

\bibitem {beckermann}\textsc{B. Beckermann, S. A. Goreinov, and E. E.
Tyrtyshnikov}, \emph{Some remarks on the Elman estimates for GMRES}, SIAM J.
Matrix Anal. Appl., 27 (2006), pp. 772--778.

\bibitem {benamou}\textsc{J. Benamou, and B. Despr\`es}, \emph{A domain
decomposition method for the Helmholtz equation and related optimal control
problems}, J. Comput. Phys., 136 (1997), pp. 68--82.

\bibitem {berenger}\textsc{J. P. Berenger},
\emph{A perfectly matched layer for the absorption of electromagnetic waves},
J. Comput. Phys., 114 (1994), pp. 185--200.

\bibitem {cocquet-largeshift}\textsc{P. H. Cocquet, and M. J. Gander}, \emph{How large a
shift is needed in the shifted Helmholtz Preconditioner for its effective
inversion by multigrad?}, SIAM J. Sci. Comput., 39 (2017), pp. A438--A478.

\bibitem {cummings}\textsc{P. Cummings, and X. Feng},
\emph{Sharp regularity coefficient estimates for complex-valued acoustic
and elastic Helmholtz equation},
Math. Models Methods Appl. Sci., 16 (2006), pp. 139--160.

\bibitem {elman}\textsc{H. C. Elman}, \emph{Iterative Methods for sparse
nonsymmetric systems of linear equations}, PhD thesis, Yale University, 1982.

\bibitem {enquist-pml}\textsc{B. Engquist, and L. Ying}, \emph{Sweeping
preconditioner for the Helmholtz equation: moving perfectly matched layers},
Multiscale Model. Simul., 9 (2011), pp. 686--710.

\bibitem {erlangga}\textsc{Y. A. Erlangga, C. Vuik, and C. W. Oosterlee},
\emph{On a class of preconditioners for the Helmholtz equation}, Appl. Numer.
Math., 50 (2005), pp. 409--425.

\bibitem {gander-dd}\textsc{M. J. Gander},
\emph{Optimized schwarz methods}, SIAM J. Numer. Anal., 44 (2006), pp. 699--731.

\bibitem {gander-nodd}\textsc{M. J. Gander, F. Magoules, and F. Nataf},
\emph{Optimized Schwarz methods without overlap for the Helmholtz equation},
SIAM J. Sci. Comput., 24 (2002), pp. 38--60.

\bibitem {gander-smallshift}\textsc{M. J. Gander, I. G. Graham, and E. A. Spence},
\emph{Applying GMRES to the Helmholtz equation with shifted Laplacian
preconditioning: what is the largest shift for which wavenumber-independent
convergence is guaranteed?}, Numerische Mathematik, 131 (2015), pp. 567--614.

\bibitem {george73}\textsc{A. George}, \emph{Nested dissection of a regular
finite element mesh}, SIAM J. Numer. Anal., 10 (1973), pp. 345--363.

\bibitem {gutknecht}\textsc{M. H. Gutknecht, and S. R\"ollin}, 
\emph{The Chebyshev iteration revisited}, Parallel Computing, 28 (2002), pp. 263--283.

\bibitem {vangijzen07}\textsc{M. B. Van Gijzen, Y. A. Erlangga, and C. Vuik},
\emph{Spectral analysis of the discrete Helmholtz operator preconditioned with
a shifted Laplacian}, SIAM J. Sci. Comput., 29 (2007), pp. 1942--1958.

\bibitem {kelley}\textsc{C. T. Kelley, and D. E. Keyes}
\emph{Convergence analysis of pseudo-transient continuation},
SIAM J. Numer. Anal., 35 (1998), pp. 508--523.

\bibitem {parhssmfrs}\textsc{X. Liu, J. Xia, and M. V. de Hoop},
\emph{Parallel randomized and matrix-free direct solvers
for large structured dense linear systems},
SIAM J. Sci. Comput., 38 (2016), pp. S508--S538.

\bibitem{manteuffel77}\textsc{T. A. Manteuffel},
\emph{The Tchebyshev iteration for nonsymmetric linear systems},
Numer. Math., 28 (1977), pp. 307--327.

\bibitem{manteuffel78}\textsc{T. A. Manteuffel},
\emph{Adaptive procedure for estimation of parameter
for the nonsymmetric Tchebyshev iteration},
Numer. Math., 28 (1978), pp. 187--208.

\bibitem{mrhss}\textsc{X. Liu, J. Xia, and M. V. de Hoop},
\emph{Interconnected hierarchical rank-structured methods for
directly solving and preconditioning the Helmholtz equation},
submitted, 2018, Purdue CCAM Report CCAM-2018-1.

\bibitem{mac-camg}\textsc{S. P. MacLachlan, and C. W. Oosterlee},
\emph{Algebraic multigrid solvers for complex-valued matrices},
SIAM J. Sci. Comput., 30 (2008), pp. 1548--1571.

\bibitem {melenk}\textsc{J. M. Melenk}, \emph{On generalized finite element methods},
PhD thesis, The University of Maryland, 1995.

\bibitem {opfer}\textsc{G. Opfer, and G. Schober},
\emph{Richardson's iteration for nonsymmetric matrices},
Linear Algebra Appl., 58 (1984), pp. 343--361.

\bibitem {osei-shift}\textsc{D. Osei-Kuffuor, and Y. Saad},
\emph{Preconditioning Helmholtz linear systems},
Appl. Numer. Math., 60 (2010), pp. 420--431.

\bibitem {reichel}\textsc{L. Reichel},
\emph{The application of Leja points to Richardson iteration and polynomial preconditioning},
Linear Algebra Appl., 154 (1991), pp. 389--414.

\bibitem {saad-ilut}\textsc{Y. Saad}, \emph{ILUT: A dual threshold incomplete LU factorization},
Numer. Linear Algebra Appl., 1 (1994), pp. 387--402.

\bibitem {saad-lspoly}\textsc{Y. Saad}, \emph{Least squares polynomials in
the complex plane and their use for solving nonsymmetric linear systems},
SIAM J. Numer. Anal., 24 (1987), pp. 155--169.

\bibitem {saad-sparse}\textsc{Y. Saad}, \emph{Iterative methods for sparse
linear systems}, SIAM, (2003).

\bibitem {saad-eigbook}\textsc{Y. Saad}, \emph{Numerical methods for large
eigenvalue problems: revised edition}, SIAM, (2011).

\bibitem {saad-fgmres}\textsc{Y. Saad}, \emph{A flexible inner-outer
preconditioned GMRES algorithm}, SIAM J. Sci. Comput., 14 (1993), pp. 461--469.

\bibitem {saad-gmres}\textsc{Y. Saad, and M. H. Schultz}, \emph{GMRES: A
generalized minimal residual algorithm for solving nonsymmetric linear
systems}, SIAM J. Sci. Stat. Comput., 7 (1986), pp. 856--869.

\bibitem {saff}\textsc{E. B. Saff, and R. S. Varga},
\emph{Zero-free parabolic regions for sequences of polynomials},
SIAM J. Math. Anal., 7 (1976), pp. 344--357.

\bibitem{shenH}\textsc{Shen, J. and Wang, L.},
\emph{Spectral Approximation of the Helmholtz Equation with High Wave Numbers},
SIAM J.  Numer. Anal., 43 (2005), pp. 623--644.

\bibitem {shin-sponge}\textsc{C. Shin}
\emph{Sponge boundary condition for frequency-domain modeling},
Geophysics, 60 (1995), pp. 1870--1874.

\bibitem {stolk-dd}\textsc{C. C. Stolk}, \emph{A rapidly converging domain
decomposition method for the Helmholtz equation}, J. Comput. Phys., 241
(2013), pp. 240--252.

\bibitem {szego} \textsc{G. Szeg\"o},
\emph{\"Uber eine eigenschaft der exponentialreihe}, Berlin Math. Ges. Sitzunsber.,
23 (1924), pp. 50--64.

\bibitem {feast} \textsc{P. Tang, and E. Polizzi},
\emph{FEAST as a subspace iteration eigensolver accelerated by approximate
spectral projection}, SIAM J. Matrix Anal. Appl., 35 (2014), pp. 354--390.

\bibitem {wrigley} \textsc{H. E. Wrigley},
\emph{Accelerating the Jacobi method for solving simultaneous equations by
Chebyshev extrapolation when the eigenvalues of the iteration matrix are complex},
Comput. J., 6 (1963), pp. 169--176.


\bibitem{ls-feast}\textsc{Y. Xi, and Y. Saad},
\emph{Computing partial spectra with least-squares rational filters}, SIAM J. Sci. Comput., pp. A3020--A3045.

\bibitem {xi-ration}\textsc{Y. Xi, and Y. Saad},
\emph{A rational function preconditioner for indefinite sparse linear systems},
SIAM J. Sci. Comput., 39 (2017), pp. A1145--A1167.

\bibitem {zepeda}\textsc{L. Zepeda-N\'{u}\~{n}ez, and L. Demanet}, \emph{The
method of polarized traces for the 2D Helmholtz equation}, J. Comput. Phys.,
308 (2016), pp. 347--388.
\end{thebibliography}
\end{document}